\renewcommand{\leq}{\leqslant}
\begin{document}
\newcommand{\dyle}{\displaystyle}
\newcommand{\R}{{\mathbb{R}}}
\newcommand{\Hi}{{\mathbb H}}
\newcommand{\Ss}{{\mathbb S}}
\newcommand{\N}{{\mathbb N}}
\newcommand{\Rn}{{\mathbb{R}^n}}
\newcommand{\F}{{\mathcal F}}
\newcommand{\ieq}{\begin{equation}}
\newcommand{\eeq}{\end{equation}}
\newcommand{\ieqa}{\begin{eqnarray}}
\newcommand{\eeqa}{\end{eqnarray}}
\newcommand{\ieqas}{\begin{eqnarray*}}
\newcommand{\eeqas}{\end{eqnarray*}}
\newcommand{\f}{\hat{f}}
\newcommand{\Bo}{\put(260,0){\rule{2mm}{2mm}}\\}
\newcommand{\1}{\mathlarger{\mathlarger{\mathbbm{1}}}}


\theoremstyle{plain}
\newtheorem{theorem}{Theorem} [section]
\newtheorem{corollary}[theorem]{Corollary}
\newtheorem{lemma}[theorem]{Lemma}
\newtheorem{proposition}[theorem]{Proposition}
\def\neweq#1{\begin{equation}\label{#1}}
\def\endeq{\end{equation}}
\def\eq#1{(\ref{#1})}


\theoremstyle{definition}
\newtheorem{definition}[theorem]{Definition}
\newtheorem{remark}[theorem]{Remark}
\numberwithin{figure}{section}

\title[Beneath the kinetic interpretation of noise]{Beneath the kinetic interpretation of noise}

\author[C. Escudero]{Carlos Escudero}
\author[H. Rojas]{Helder Rojas}
\address{}
\email{}

\keywords{Stochastic differential equations, stochastic integrals, diffusion processes, partial differential equations, interpretations of noise, It\^o vs Stratonovich dilemma, Fick law.
\\ \indent 2010 {\it MSC: 35K10, 35Q84, 60H05, 60H10, 60J60, 82C31}}

\date{\today}

\begin{abstract}
Diffusion theory establishes a fundamental connection between stochastic differential equations and partial differential equations. The solution of a partial differential equation known as the Fokker-Planck equation describes the probability density of the stochastic process that solves a corresponding stochastic differential equation. The kinetic interpretation of noise refers to a prospective notion of stochastic integration that would connect a stochastic differential equation with a Fokker-Planck equation consistent with the Fick law of diffusion, without introducing correction terms in the drift. This work is devoted to identifying the precise conditions under which such a correspondence can occur. One of these conditions is a structural constraint on the diffusion tensor, which severely restricts its possible form and thereby renders the kinetic interpretation of noise a non-generic situation. This point is illustrated through a series of examples. Furthermore, the analysis raises additional questions, including the possibility of defining a stochastic integral inspired by numerical algorithms, the behavior of stochastic transport equations in heterogeneous media, and the development of alternative models for anomalous diffusion. All these topics are addressed using stochastic analytical tools similar to those employed to study the main problem: the existence of the kinetic interpretation of noise.
\end{abstract}
\maketitle

\section{Introduction}\label{intro}

Diffusion is usually considered as a fundamental transport process in many sciences, including physics, chemistry, biology, and even finance. Since the seminal work of Adolf Fick~\cite{fick1855}, diffusive transport has frequently been understood macroscopically through the constitutive law
\begin{equation}\nonumber
  \mathbf{J} = -\mathbf{D}\nabla u,
\end{equation}
which postulates that the flux $\mathbf{J}$ of a conserved scalar quantity $u$ (such as mass density) is proportional to the gradient of that quantity, where $\mathbf{D}$ denotes the diffusion tensor. This law is normally combined with the continuity equation to yield the well-known family of \emph{convection-diffusion equations}. Today, this family constitutes one of the most common classes of mathematical models for transport phenomena. Despite its mathematical simplicity, the Fick law embodies a deep physical idea: it gives a macroscopic representation of the irreversible relaxation towards equilibrium. Half a century later, Einstein showed how this law emerges from the underlying stochastic dynamics of Brownian motion~\cite{einstein1905}. That connection was further underpinned by subsequent work on the \emph{fluctuation-dissipation relation}~\cite{callen1951,kubo1966,zwanzig2001}. But quite remarkably, the pioneering work that connected microscopic fluctuations to macroscopic diffusion was that of Bachelier in the field of mathematical finance~\cite{bachelier}. This just highlights the interdisciplinary importance of diffusive transport.

Microscopically, the trajectories of diffusive particles (such as point masses) are commonly represented by stochastic differential equations (SDEs):
\begin{equation}\label{1stsde}
  \mathrm{d}X_t = b(X_t)\,\mathrm{d}t + \sigma(X_t)\,\mathrm{d}W_t,
\end{equation}
where the noise amplitude $\sigma$ is related to the diffusion tensor via $\mathbf{D} = \sigma\sigma^{\mathsf{T}}$.
For systems with \emph{state-dependent diffusivity}, i.e.\ $\mathbf{D}=\mathbf{D}(x)$, that equation becomes highly nontrivial and needs to be understood within the theory of stochastic integration by Itô~\cite{ito1,ito2}. Although this theory is enough to connect microscopic trajectories to the macroscopic appearance of diffusion~\cite{kuo,oksendal}, this has not stopped a broad discussion in the physical literature about the convenience of substituting the Itô integral by some newer notion of stochastic integration, prominently that of Stratonovich~\cite{stratonovich}. For some recent discussions on the topic, we refer the reader to~\cite{bhattacharyay2020generalization,bhattacharyay2025active,ce,dhawan2024ito,cescudero,cescudero2,em}. This \textit{interpretation of noise dilemma}, consequently, becomes relevant when it comes to diffusive transport in nonuniform and anisotropic media, where the diffusion tensor varies spatially. This happens, for example, across diverse geophysical settings. These include sediment and bedload transport, solute migration in heterogeneous and variably saturated aquifers, vertical mixing and buoyancy effects in the ocean, and other geomorphological and hydrogeological systems~\cite{ancey2015sads,henri2022rwpt,nordam2023gmd,perez2019reactive,perezillanes2024modpathrw,spivakovskaya2007lagrangian}.

Not surprisingly, more interpretations of noise have been explored throughout the years. Indeed, the dependence of a stochastic integral, thought of in a Riemann-sum-like fashion, on the evaluation point of the integrand has been a subject of study for decades. Herein, we depart from reference~\cite{escudero2023ito}, where we carried out a comparison among three different interpretations of noise —namely, the Itô, Stratonovich, and Hänggi–Klimontovich (HK) calculi— within a stochastic differential equation. These interpretations are the most natural from a symmetry viewpoint, as they correspond to evaluating an integrand, respectively, at the left endpoint, middle point, and right endpoint of each subinterval in the partition generated by the Riemann sum approximation. That work showed how each convention alters both the paths of the process and the evolution of their probability densities from a rigorous mathematical perspective, highlighting the mutual comparison between them. In the present work, we depart from there but focus on the connection of convection-diffusion equations with stochastic processes. Precisely, we consider equations of the form
\begin{equation}\label{CDE}
\left\{
\begin{array}{ll}
   \partial_t u= - \nabla \cdot(b({x})\,u)+\frac{1}{2}\nabla \cdot(\mathbf{D}({x})\nabla u), & {x} \in \mathbb{R}^d , \, t > 0,\\
u(x, 0) = u_0(x), & x \in \mathbb{R}^d, \\
\end{array}
\right.
\end{equation}
posed in an arbitrary dimension $d$, and where the diffusion tensor $\mathbf{D}({x})$  and the velocity field $b(x)$ are spatially dependent. As already mentioned, such equations appear frequently in physics, where $u = u(x,t)$ commonly represents the density of a diffusive material at point $x$ and time $t$. They describe the spatio-temporal dispersal of particles subjected simultaneously to a convective field and a heterogeneous diffusion that is compatible with Fick law (modeled, respectively, by the first and second terms in the right-hand side of that equation). One of the main goals of this paper is to investigate the precise connection between this type of convection-diffusion equations and associated multidimensional SDEs of the form
\[
dX_t = b(X_t) \,\mathrm{d}t + \sigma(X_t) \bullet \mathrm{d}W_t,
\]
where $\sigma$ satisfies $\mathbf{D} = \sigma \sigma^\top$ and $\bullet$ denotes the Hänggi–Klimontovich integral (as mentioned before, the variant of the Stratonovich integral in which the evaluation point of the integrand is taken to be the right endpoint of each subinterval in the partition). That relation —namely, the former is the Fokker-Planck equation of the latter— is classical in one dimension~\cite{escudero2023ito}, but known to fail in arbitrary dimensions~\cite{hutter1998}. Herein, we will show that under the structural condition $\nabla \cdot \mathbf{D} = 2 \sigma \nabla \cdot \sigma^\top$, the probability density of the process $X_t$ does obey the partial differential equation for $u$, provided enough regularity is granted. However, this does not hold when such a condition is not met, which is the generic case in any dimension equal to or greater than two. Reference~\cite{hutter1998} raises the question of the existence of a discretization scheme for SDEs that directly yielded equation~\eqref{CDE} as its Fokker-Planck equation. Of course, the solution to that equation corresponds to the probability density of a process that can be shown to obey a family of Itô SDEs, all of which present explicit correction terms in the drift. However, the existence of a notion of stochastic integration —the \textit{kinetic interpretation of noise} from~\cite{hutter1998}— that is able to provide the desired connection with neither the correction terms in the drift nor the fulfillment of the structural condition seems to be not possible. Showing this, as well as complementary questions, is the goal of the present work.

The outline of this paper is as follows. In Section~\ref{sec:cdequations} we make precise the connection between convection-diffusion equations and Itô SDEs we will employ thereafter. In Section~\ref{sec:multidimensional} we introduce the multidimensional version of the HK integral. In Section~\ref{hkcalculus} we explore the relation between HK and Itô SDEs. In Section~\ref{sec:cdehkc} we establish the connection between convection-diffusion equations and HK SDEs in the absence of drift correction, which relies on a key structure of the divergence of the diffusion tensor. In that section, we also explore examples where the condition on $\nabla \cdot \mathbf{D}$ either holds or fails, thereby illustrating the impact of the properties of the diffusion tensor on the coupling between HK SDEs and convection-diffusion equations. In Section~\ref{sec:beyond} we address a number of complementary questions: the ill-posedness of an alternative discretization scheme for the kinetic interpretation of noise, which was motivated by a numerical algorithm; a complete derivation of another discretization scheme motivated by a different numerical algorithm, to illustrate how such a program can be carried out; the application of that scheme on a physically motivated problem, which moreover illustrates the difficulties that arise when one of our main assumptions is removed, namely, uniform ellipticity; and, lastly, the assessment of another physical model that replaces degenerate ellipticity by a more tractable analytical structure. Finally, in Section~\ref{sec:conclusions}, we draw our main conclusions.

\subsection*{Basic notations and setting}

Let $d$ and $m$ be positive integers. We consider $\mathbb{R}^m$ endowed with the Euclidean norm $|\cdot|$, and let $\mathcal{M}_{d\times m}(\mathbb{R})$ be the set of all $d\times m$ matrices with real coefficients, endowed with the Frobenius (Hilbert-Schmidt)  norm, defined as
$$
||B||_{F}:=\bigg(\sum\limits_{i=1}^{d}\sum\limits_{j=1}^{m} b^2_{ij}\bigg)^{1/2}.
$$
The spectral norm, defined as
$$
||B||_{S}:=\max_{\{x\in\mathbb{R}^m:\,|x|=1\}}|Bx|,$$
where $B=(b_{ij})\in\mathcal{M}_{d\times m}(\mathbb{R})$, will also be considered for matrices. Let $\mathbb{O}(d)$ be the set of all $d\times d$ orthogonal matrices, i.e. $\mathbb{O}(d):=\{R\in\mathcal{M}_{d\times d}(\mathbb{R})\,:\,R^{\top}R=\mathbb{I}_d\}$. Furthermore, for some positive real number $T$, let $\Psi:\mathbb{R}^m\times[0,T]\longrightarrow\mathcal{M}_{d\times m}(\mathbb{R})$ be a mapping for which the $m$-dimensional vector $\Psi_{i*}$ denotes the $i$'th row of the $d\times m$ matrix $\Psi=(\psi_{ij})$. Analogously, for $1\leq k \leq m$, the $d$-dimensional vector $\big(\partial_{k}\Psi\big)_{* l}$ denotes the $l$'th column of the $d\times m$ matrix $\partial_{k}\Psi=\Big({\partial_{k}\psi_{ij}}\Big)$. If $\sigma=(\sigma_{ij}) \in \mathcal{M}_{d\times m}(\mathbb{R})$, we write $\nabla \,\sigma=\big({\partial_{k}\sigma_{ij}}\big)_{ijk}$ and
$$
\nabla \cdot\sigma=\bigg(\sum\limits_{j=1}^{m}{\partial_{j} \sigma_{ij}}\bigg)
$$
to denote the gradient and the divergence of $\sigma$, respectively. If $A=(a_{ijk})$ is a third-order tensor as, for example, in the case of $\nabla\,\sigma$, we write
$$
A:B=\bigg(\sum\limits_{j=1}^d\sum\limits_{k=1}^m a_{ijk}b_{jk}\bigg)
$$
to denote the double vertical dot product. We use the abbreviation $||\cdot||_{p}:=||\cdot||_{L^p(\mathbb{R}^d)}$, $1 \le p \le \infty$, and $C_c^{\infty}(\mathbb{R}^d)$ denotes the class of smooth functions with compact support. In addition, for $U\subset\mathbb{R}^d$ open and $k\in\mathbb{N}_0$, we write $C_b^k(U)$ for the class of scalar functions $f:U\to\mathbb{R}$ such that, for every multi-index $\alpha=(\alpha_1,\dots,\alpha_d)\in\mathbb{N}_0^d$ with $|\alpha|:=\sum_{i=1}^d \alpha_i\le k$, the partial derivative $D^\alpha f:=\partial_{1}^{\alpha_1}\cdots \partial_{d}^{\alpha_d}f$ exists, is continuous, and satisfies 
\[
\sup_{x\in U}|D^\alpha f(x)|<\infty,
\]
where $D^0 f=f$, $D^{e_i}f=\partial_{i}f$ and $e_i$ is a member of the canonical basis of $\mathbb{R}^d$. When $U=\mathbb{R}^d$, we write $C_b^k(\mathbb{R}^d)$. For matrix fields $A:U\to\mathcal{M}_{d\times d}(\mathbb{R})$ we use the componentwise convention: $A\in C_b^k(U)$ iff each entry $A_{ij}\in C_b^k(U)$. From now on, we will work on a filtered probability space $(\Omega, \mathbb{F}, \mathcal{F}_t, \mathbb{P})$ completed with the $\mathbb{P}$-null sets and where a $m$-dimensional Brownian motion $W_t$ is defined. Moreover, we assume that the right-continuous filtration $\mathcal{F}_t \supseteq \mathcal{F}_t^W$, where $\mathcal{F}_t^W$ is the natural filtration of $W_t$.

\section{Convection-Diffusion Equations and Itô Calculus}\label{sec:cdequations}

The connection between partial differential equations of convection-diffusion type and Itô stochastic differential equations has been established since long ago. In this section, we start specifying some known facts, and then move to introduce the precise statements we will need in the remainder of this work.

Equation~\eqref{CDE} can be rewritten as
\begin{equation}\label{opeEquation}
\left\{
\begin{array}{ll}
   \partial_t u (x,t)= \mathcal{L} u(x,t), & {x} \in \mathbb{R}^d, \, t > 0,\\
u(x, 0) = u_0(x), & x \in \mathbb{R}^d, \\
\end{array}
\right.
\end{equation}
where the convection-diffusion operator $\mathcal{L}$ is
\begin{equation}\label{CDO}
    \mathcal{L} u(x):=-\sum\limits_{i=1}^{d} \partial_{i}(b_{i}(x)u(x))+\frac{1}{2}\sum\limits_{i,j=1}^{d} \partial_{i}(\mathbf{D}_{ij}(x)\partial_{j}u(x)),
\end{equation}
where $b=(b_{1}\cdots, b_{d}):\mathbb{R}^d \longrightarrow \mathbb{R}^d$ and $\mathbf{D}:\mathbb{R}^d \longrightarrow \mathcal{M}_{d\times d}(\mathbb{R})$ are (for the time being) continuously differentiable functions. From now on, unless explicitly specified, we assume $\mathcal{L}$ to be a uniformly elliptic operator in divergence form, so the resulting equation is parabolic~\cite{evanspde}. Moreover, under suitable assumptions on the parameters, this equation possesses a solution that is unique and classical~\cite{evanspde}. Note that the uniform ellipticity condition is standard within mathematical analysis. It also holds in many physical cases, but it cannot be considered universal within that realm. Sometimes, a degenerate elliptic operator is needed, as illustrated in Subsection~\ref{sec:heterogeneous}, where some of the difficulties that might arise in such a case are shown.

On the other hand, consider the Itô stochastic differential equation in $\mathbb{R}^d$:
\begin{equation}\label{SDE-ITO}
\mathrm{d}X_t= \Big[b(X_t)+\frac{1}{2}\nabla\cdot\mathbf{D}(X_t)\Big]\,\mathrm{d}t+\sigma(X_t)\,\mathrm{d}W_t, \qquad X_0=Z,
\end{equation}
with $W_t$ a $m$-dimensional Brownian motion and $Z$ a $\mathcal{F}_0$-measurable random variable in $\mathbb{R}^d$, where
$\{\mathcal{F}_t\}_{t \ge 0}$ is the initially enlarged filtration $\mathcal{F}_t=\mathcal{F}_t^W \vee \sigma(Z)$; moreover, assume that the law of $Z$ is absolutely continuous with respect to Lebesgue measure. For the time being, we take $\sigma:\mathbb{R}^d \longrightarrow \mathcal{M}_{d\times m}(\mathbb{R})$ to be any function such that $\mathbf{D}=\sigma \sigma^{\top}$. From the diffusion theory for Itô SDEs it is known that, under adequate assumptions on the parameters and initial condition, the solution $X_t$ of~\eqref{SDE-ITO} is a Markov process with infinitesimal generator $ \mathcal{L}^*$ defined by
\begin{equation}\label{}  \mathcal{L}^*\phi(x):=\sum\limits_{i=1}^{d}\Big[b(x)+\frac{1}{2}\nabla\cdot\mathbf{D}(x)\Big]_{i} \partial_{i}\phi(x)+\frac{1}{2}\sum\limits_{i,j=1}^{d}\mathbf{D}_{ij}(x) \partial_{i}\partial_{j}\phi(x), \qquad \textrm{for all}\qquad \phi\in C_c^{\infty}(\mathbb{R}^d),
\end{equation}
where $\mathcal{L}^*$ is the adjoint operator of $\mathcal{L}$~\cite{kuo, oksendal,protter2005stochastic}.

If we depart from equation~\eqref{opeEquation}, which we consider to be our datum, and we look for paths of a stochastic process that underlies it, in the sense that it comes as a strong solution to a precise SDE, we have already encountered our first trouble. Namely, that the correspondence of the partial differential equation (PDE) to the SDE is not unique. This happens in the simplest possible case: one-dimensional Brownian motion. In such a case, the heat equation (subjected to a unit Dirac mass initial condition):
$$
\partial_t u (x,t)= \Delta u(x,t)
$$
corresponds to two SDEs:
$$
\mathrm{d}X_t= +\mathrm{d}W_t \qquad \text{and} \qquad \mathrm{d}X_t= -\mathrm{d}W_t,
$$
from where we find two solutions $X_t= \pm W_t$ (up to an additive constant that equals the initial condition). Of course, both solutions are equal in law, but they differ pathwise. As herein we are mostly interested in strong solutions to SDEs, we need to specify which solution we select. In order to do so, first of all we need to take $\sigma:\mathbb{R}^d \longrightarrow \mathcal{M}_{d \times d}(\mathbb{R})$, that is $m=d$. Although $m>d$ is perfectly possible, this only increases the number of solutions, and at the price of introducing more noises than physical dimensions, which we consider unnatural in the absence of a modeling justification. Even in such a case, we have to extract the square root of a given $\mathbf{D}$, i.e. we look for a matrix $\sigma$ such that $\mathbf{D}=\sigma \sigma^{\top}$. It is well known that there exist infinitely many solutions to this problem (except in the scalar case, which admits just two), but one can fix uniqueness by selecting the unique symmetric positive definite square matrix $\sigma$ (more solutions are generated by multiplication of this by an orthogonal matrix). From now on we will assume this, the so-called \emph{principal square root} of $\sigma$, for the sake of symmetry and due to the fact that, although different choices lead to solutions that differ pathwise, all share the same law. But note that different choices are very well possible, like for example the matrix given by the Cholesky decomposition of $\mathbf{D}$, which might be even more relevant for some purposes~\cite{golub2013matrix}.

\begin{remark}
The existence of such a $\sigma$ is guaranteed for any symmetric and positive definite matrix $\mathbf{D}$, which in turn is guaranteed by the assumption of uniform ellipticity. The symmetry of the diffusion tensor reflects the physical Onsager reciprocity and the mathematical fact that any asymmetric contribution can be absorbed by the drift term. Note, on the other hand, that if $\mathbf{D}$ is not symmetric, then there is no $\sigma$ such that $\mathbf{D}=\sigma \sigma^{\top}$.
\end{remark}

\begin{remark}
It is interesting to note that the existence of such a square root of $\mathbf{D}$ is guaranteed provided it is just positive semidefinite. This could correspond, however, to a degenerate elliptic operator and therefore will not be treated here, since that presents other complications (see Subsection~\ref{sec:heterogeneous}). Nevertheless, in such a case it is also possible to fix uniqueness of the square root by selecting the only $\sigma$ that is a symmetric positive semidefinite square matrix.
\end{remark}

Now that we have made precise our selection of $\sigma$, we move on to establish the precise assumptions we will employ to build the relationship we seek between~\eqref{opeEquation} and~\eqref{SDE-ITO}. Namely, we assume the following hypotheses:
\begin{itemize}
    \item[(H1)] (Regularity of the parameters)
    $b:\mathbb{R}^d \longrightarrow \mathbb{R}^d$ and  $\mathbf{D}:\mathbb{R}^d \longrightarrow \mathcal{M}_{d\times d}(\mathbb{R})$ are $C^4$ and $C^5$ functions respectively, and there exists a constant $K > 0$ such that for all $x \in \mathbb{R}^d$, and $i, j, k, l = 1, \ldots, d$,
    $$
    |\partial_k b_i(x)|+ |\partial_k \mathbf{D}_{ij}(x)| + |\partial_k \partial_l \mathbf{D}_{ij}(x)| \leq K.
    $$
    \item[(H2)] (Uniform ellipticity of the diffusion operator) There exists $\alpha > 0$ such that for all $x, \xi \in \mathbb{R}^d$,
    $$
    \xi^{\top} \mathbf{D}(x) \xi \geq \alpha |\xi|^2.
    $$
    \item[(H3)] (Compatibility of the initial conditions) $Z$ and $\{W_t\}_{t \ge 0}$ are independent, and $u_0 \in L^1(\mathbb{R}^d)$ is the probability density function (PDF) of $Z$, therefore $u_0\geq 0$ and $||u_0||_1=1$.
\end{itemize}

\begin{remark}\label{globalcont}
Note that (H1) implies that the functions $b$ and $\mathbf{D}$ are globally Lipschitz continuous, but they could be unbounded. 
\end{remark}

The following results establish a connection between PDE~\eqref{opeEquation} and SDE~\eqref{SDE-ITO} on a finite time interval $[0,T]$ with $T>0$ arbitrary.

\begin{lemma}[Standard Fokker-Planck form]
\label{lem:sfpf}
Under (H1), the initial value problems~\eqref{opeEquation} and
\begin{equation}\label{CDE0}
\left\{
\begin{array}{ll}
   \partial_t u (x,t)= \mathcal{L}_{0} \, u(x,t), & {x} \in \mathbb{R}^d, \, t > 0,\\
u(x, 0) = u_0(x), & x \in \mathbb{R}^d, \\
\end{array}
\right.
\end{equation}
where
\begin{equation}\label{FPO}
\mathcal{L}_0 \, u = -\sum_{i=1}^d \partial_i (\tilde{b}_i u) + \frac{1}{2} \sum_{i,j=1}^d \partial_i \partial_j (D_{ij} u)
\end{equation}
and $\tilde{b}_i := b_i + \frac{1}{2} (\nabla \cdot \mathbf{D})_i = b
_i + \frac{1}{2} \sum_{j=1}^d \partial_j \mathbf{D}_{ij}$, coincide on classical solutions.
\end{lemma}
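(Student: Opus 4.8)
The plan is to establish the stronger pointwise statement that $\mathcal{L}u(x)=\mathcal{L}_0u(x)$ for every $x\in\mathbb{R}^d$ and every function $u$ that is twice continuously differentiable near $x$; the lemma then follows immediately, since a classical solution of either~\eqref{opeEquation} or~\eqref{CDE0} has, by definition, $u(\cdot,t)\in C^2(\mathbb{R}^d)$ for $t>0$ and is continuous up to $t=0$ with $u(\cdot,0)=u_0$, the same datum appears in both problems, and under (H1) the coefficient $\mathbf{D}$ is $C^5$, hence far more regular than needed for the manipulations below.

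The computation is a double application of the Leibniz rule to the second-order term of $\mathcal{L}_0$. First I would write, for each fixed pair $i,j$,
\[
\partial_i\partial_j(\mathbf{D}_{ij}u)=\partial_i\big((\partial_j\mathbf{D}_{ij})\,u\big)+\partial_i\big(\mathbf{D}_{ij}\,\partial_j u\big),
\]
and sum over $i,j$. With the notation $(\nabla\cdot\mathbf{D})_i=\sum_{j}\partial_j\mathbf{D}_{ij}$, the first group of terms collapses to $\sum_i\partial_i\big((\nabla\cdot\mathbf{D})_i\,u\big)$, while the second group is exactly twice the diffusion term of $\mathcal{L}$, so that
\[
\frac12\sum_{i,j=1}^d\partial_i\partial_j(\mathbf{D}_{ij}u)=\frac12\sum_{i=1}^d\partial_i\big((\nabla\cdot\mathbf{D})_i\,u\big)+\frac12\sum_{i,j=1}^d\partial_i\big(\mathbf{D}_{ij}\,\partial_j u\big).
\]
Next I would expand the drift term of $\mathcal{L}_0$ using $\tilde{b}_i=b_i+\tfrac12(\nabla\cdot\mathbf{D})_i$, which gives $-\sum_i\partial_i(\tilde{b}_i u)=-\sum_i\partial_i(b_i u)-\tfrac12\sum_i\partial_i\big((\nabla\cdot\mathbf{D})_i\,u\big)$. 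Adding this to the previous display, the two occurrences of $\tfrac12\sum_i\partial_i\big((\nabla\cdot\mathbf{D})_i\,u\big)$ cancel with opposite signs, and what is left is precisely $\mathcal{L}u$ as defined in~\eqref{CDO}. Since $\partial_t u=\mathcal{L}u$ and $\partial_t u=\mathcal{L}_0u$ are then the same equation carrying the same initial condition, a function is a classical solution of one problem if and only if it is a classical solution of the other.

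I do not expect any genuine obstacle here: the statement is a pointwise algebraic identity, and the only point requiring (minimal) care is the regularity bookkeeping — namely that $\mathbf{D}_{ij}u\in C^2$, so that $\partial_i\partial_j(\mathbf{D}_{ij}u)$ is well defined and the mixed partials may be rearranged by Schwarz's theorem, both of which are automatic from (H1) and the definition of classical solution. I would also note, for later reference, that the identity uses nothing about the matrix $\mathbf{D}$ beyond $C^2$-regularity; in particular no symmetry of $\mathbf{D}$ is invoked at this stage — symmetry will only become relevant when a square root $\sigma$ with $\mathbf{D}=\sigma\sigma^\top$ is extracted.
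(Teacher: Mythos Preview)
Your proposal is correct and follows essentially the same approach as the paper: a Leibniz-rule identity showing $\mathcal{L}u=\mathcal{L}_0 u$ pointwise for $u\in C^2$, after which the two initial value problems coincide on classical solutions. The only cosmetic difference is direction and notation: the paper starts from $\mathcal{L}u$ in vector form, writes $\mathbf{D}\nabla u=\nabla\cdot(\mathbf{D}u)-u\,\nabla\cdot\mathbf{D}$, and regroups to reach $\mathcal{L}_0 u$, whereas you start from $\mathcal{L}_0 u$ in index form and cancel the $(\nabla\cdot\mathbf{D})\,u$ contributions to reach $\mathcal{L}u$.
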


\begin{proof}
From~\eqref{CDO}, we have the following string of equalities
\begin{align*}
\partial_t u &= - \nabla \cdot (b u) + \frac{1}{2} \nabla \cdot (\mathbf{D} \nabla u)\\
&= - \nabla \cdot (b u) 
+ \frac{1}{2} \nabla \cdot \big( \nabla \cdot (\mathbf{D}u) - u \nabla \cdot \mathbf{D} \big) \\
&= - \nabla \cdot \bigg(\Big[b+\frac{1}{2} \nabla \cdot \mathbf{D}\Big]u\bigg) 
+ \frac{1}{2} \nabla \cdot \Big( \nabla \cdot (\mathbf{D}u)\Big) \\
&= - \sum_{i=1}^{d} \partial_{i} \left( \left[ b + \frac{1}{2} \nabla \cdot \mathbf{D} \right]_i u \right) 
+ \frac{1}{2} \sum_{i,j=1}^{d} \partial_{i} \partial_{j} \left( \mathbf{D}_{ij} u \right),
\end{align*}
provided $u$ is a classical solution of either~\eqref{opeEquation} or~\eqref{CDE0}, i.e. \(u \in C^1((0, T]; C(\mathbb{R}^d)) \cap C((0, T]; C^2(\mathbb{R}^d)) \).
\end{proof}

\begin{lemma}\label{regularity}
Let $\mathbf{D}:\mathbb{R}^d \longrightarrow \mathcal{M}_{d\times d}(\mathbb{R})$
be a symmetric matrix that satisfies the following two conditions:

\begin{enumerate}
    \item (Uniform ellipticity) There exists $\alpha > 0$ such that for all $x, \xi \in \mathbb{R}^d$,
    $$
    \xi^{\top} \mathbf{D}(x) \xi \geq \alpha |\xi|^2.
    $$
    
    \item (First-order regularity) There exists a
constant $M_k$ such that for all \( k \in \{1, \ldots, d\} \) it holds that
    \[
    \sup_{x \in \mathbb{R}^d} \max_{1 \leqslant i,j \leq d} 
    \left| \partial_{k} \mathbf{D}_{ij}(x) \right| \leq M_k.
    \]
\end{enumerate}

Let $\sigma:\mathbb{R}^d \longrightarrow \mathcal{M}_{d\times d}(\mathbb{R})$ be the principal root of \(\mathbf{D}\), i.e., the unique symmetric positive definite matrix satisfying $\mathbf{D}=\sigma \sigma^{\top}$. Then, for all \( k \in \{1, \ldots, d\} \) and \( i,j \in \{1, \ldots, d\} \), it holds that
\[
\sup_{x \in \mathbb{R}^d} 
\left| \partial_{k} \sigma_{ij}(x) \right|
\leqslant \frac{d^2}{2\sqrt{\alpha}} M_k.
\]
\end{lemma}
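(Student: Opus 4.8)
The plan is to extract $\partial_k\sigma$ from the algebraic identity defining $\sigma$, namely $\mathbf{D}=\sigma\sigma^{\top}=\sigma^2$ (the last equality because $\sigma$ is symmetric), by differentiating it and solving the resulting linear matrix equation. Before differentiating I would record that $\sigma$ is itself a $C^1$ matrix field: the polynomial map $F(\mathbf{D},\sigma):=\sigma^2-\mathbf{D}$ has partial differential in the second slot equal to the Lyapunov operator $H\mapsto \sigma H+H\sigma$, whose eigenvalues are the sums $\mu_i+\mu_j$ of the eigenvalues of $\sigma$, hence positive and the operator invertible whenever $\sigma$ is positive definite; by the implicit function theorem the principal square root is a smooth function of its argument on the open cone of symmetric positive definite matrices, and condition~1 keeps $\mathbf{D}(x)$—hence $\sigma(x)$—inside that cone uniformly in $x$, so composing with the ($C^1$) map $x\mapsto \mathbf{D}(x)$ gives $\sigma\in C^1(\mathbb{R}^d;\mathcal{M}_{d\times d}(\mathbb{R}))$.

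Differentiating $\mathbf{D}=\sigma^2$ with respect to $x_k$ then yields the Lyapunov equation
\[
\sigma\,(\partial_k\sigma)+(\partial_k\sigma)\,\sigma=\partial_k\mathbf{D}.
\]
Because $\sigma$ is symmetric positive definite, its eigenvalues are the positive square roots of those of $\mathbf{D}$, so condition~1 gives the uniform bound $\lambda_{\min}(\sigma(x))\geq\sqrt{\alpha}$ for all $x$; in particular the Lyapunov operator is invertible and its unique solution is
\[
\partial_k\sigma(x)=\int_0^{\infty} e^{-s\sigma(x)}\,\partial_k\mathbf{D}(x)\,e^{-s\sigma(x)}\,\mathrm{d}s ,
\]
which one verifies by substituting it into the equation and using $\sigma e^{-s\sigma}(\partial_k\mathbf{D})e^{-s\sigma}+e^{-s\sigma}(\partial_k\mathbf{D})e^{-s\sigma}\sigma=-\tfrac{\mathrm{d}}{\mathrm{d}s}\big(e^{-s\sigma}(\partial_k\mathbf{D})e^{-s\sigma}\big)$, whose integral over $[0,\infty)$ telescopes to $\partial_k\mathbf{D}$ since $\|e^{-s\sigma}\|_S\leq e^{-s\sqrt{\alpha}}$ kills the boundary term at $s=\infty$; symmetry of $\partial_k\mathbf{D}$ makes the right-hand side symmetric, matching $\partial_k\sigma$, and uniqueness closes the identification.

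It then remains to estimate entrywise. Expanding the matrix products and using $|(e^{-s\sigma})_{pq}|\leq\|e^{-s\sigma}\|_S\leq e^{-s\sqrt{\alpha}}$ together with $|\partial_k\mathbf{D}_{pq}|\leq M_k$ (condition~2),
\[
\big|\partial_k\sigma_{ij}(x)\big|\leq\int_0^{\infty}\sum_{p,q=1}^{d}\big|(e^{-s\sigma})_{ip}\big|\,\big|\partial_k\mathbf{D}_{pq}\big|\,\big|(e^{-s\sigma})_{qj}\big|\,\mathrm{d}s\leq d^2 M_k\int_0^{\infty} e^{-2s\sqrt{\alpha}}\,\mathrm{d}s=\frac{d^2}{2\sqrt{\alpha}}M_k ,
\]
uniformly in $x$, which is the claim; the factor $d^2$ is just the number of terms in the double sum. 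I expect the only genuinely delicate step to be the first one—establishing $C^1$ regularity of $\sigma$ and the uniform spectral lower bound—after which everything is a one-line computation. As a remark, diagonalizing $\sigma(x)=U(x)\Lambda(x)U(x)^{\top}$ and solving the Lyapunov equation in that basis gives $(U^{\top}\partial_k\sigma\,U)_{ij}=(U^{\top}\partial_k\mathbf{D}\,U)_{ij}/(\mu_i+\mu_j)$ with $\mu_i+\mu_j\geq 2\sqrt{\alpha}$, which is cleaner and even yields the sharper bound $\|\partial_k\sigma\|_F\leq\|\partial_k\mathbf{D}\|_F/(2\sqrt{\alpha})\leq d M_k/(2\sqrt{\alpha})$; the stated constant $d^2/(2\sqrt{\alpha})$ is the (slightly lossy) output of the cruder entrywise route.
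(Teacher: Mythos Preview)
Your proof is correct and shares the paper's core strategy: differentiate $\mathbf{D}=\sigma^2$ to obtain the Lyapunov equation $\sigma(\partial_k\sigma)+(\partial_k\sigma)\sigma=\partial_k\mathbf{D}$, then exploit the uniform spectral lower bound $\lambda_{\min}(\sigma)\ge\sqrt{\alpha}$ to bound the solution. The difference is in how the Lyapunov equation is solved. Your main line uses the integral representation $\partial_k\sigma=\int_0^\infty e^{-s\sigma}(\partial_k\mathbf{D})e^{-s\sigma}\,\mathrm{d}s$ and estimates entrywise, while the paper diagonalizes $\sigma=P\Lambda^{1/2}P^\top$, transforms the equation to the eigenbasis to read off $(P^\top\partial_k\sigma\,P)_{ij}=(P^\top\partial_k\mathbf{D}\,P)_{ij}/(\sqrt{\lambda_i}+\sqrt{\lambda_j})$, and then passes back via $\|\cdot\|_S\le\|\cdot\|_F$ bounds---which is exactly the alternative you sketch in your closing remark. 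Both routes lose the same factor of $d$ twice to reach the stated constant. One point where you are actually more careful than the paper: you justify the $C^1$ regularity of $x\mapsto\sigma(x)$ via the implicit function theorem before differentiating, whereas the paper simply differentiates and invokes the Sylvester spectral criterion for invertibility of the Lyapunov operator after the fact.
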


\begin{proof}
We depart from the identity $\mathbf{D}=\sigma \sigma^{\top}$. Differentiating with respect to the $k$-th coordinate of $x$, we obtain
\begin{equation}\label{sylvester}
    \sigma \sigma^{\prime} + \sigma^{\prime}\sigma=\mathbf{D}^{\prime},
\end{equation}
where $\sigma^{\prime}=\partial_k \sigma(x)$ and $\mathbf{D}^{\prime}=\partial_k \mathbf{D}(x)$. Since $\mathbf{D}$ is positive definite, then $\sigma$ is also positive definite. Therefore, since the $\sigma$ and $-\sigma$ matrices do not have common eigenvalues, using the spectral criterion for Sylvester equations, the linear Lyapunov operator 
\begin{equation*}
  \mathcal{L}_{\sigma}: \mathcal{M}_{d\times d} (\mathbb{R})\longrightarrow \mathcal{M}_{d\times d} (\mathbb{R}), \qquad \mathcal{L}_{\sigma}(X)= \sigma X + X \sigma,  
\end{equation*}
is invertible, which ensures that the Sylvester equation ~\eqref{sylvester} admits a unique solution $\sigma^{\prime}=\mathcal{L}^{-1}_{\sigma}(\mathbf{D}^{\prime})$, see \cite{bhatia2009positive, horn1994topics}. Subsequently, to obtain an explicit form of $\sigma^{\prime}$, we need to obtain the spectral decompositions of $\mathbf{D}$ and $\sigma$. It is well known that for every $x\in\mathbb{R}^d$ there exists an orthogonal $P\in \mathcal{M}_{d\times d}(\mathbb{R})$ such that
\begin{equation*}
   \mathbf{D} = P \Lambda P^{\top} \quad\text{and}\quad \sigma = P \Lambda^{\frac{1}{2}}\, P^{\top}, 
\end{equation*}
where $\Lambda=\text{diag}(\lambda, \dots, \lambda_d)$ and $\Lambda^{\frac{1}{2}}=\text{diag}(\sqrt{\lambda_1}, \dots, \sqrt{\lambda_d})$. Defining $H =(h_{ij}):= P^{\top} \mathbf{D}^{\prime} P$ and $R =(r_{ij}):=P^{\top} \sigma^{\prime}  P$, and multiplying equation ~\eqref{sylvester} from the left by $P^{\top}$ and from the right by \( P \), yields
\begin{equation}\label{syvestr2}
\Lambda^{\frac{1}{2}}\,R + R \,\Lambda^{\frac{1}{2}} = H.   
\end{equation}
In coordinates, equation ~\eqref{syvestr2} becomes
\[
\sqrt{\lambda_i} \, r_{ij} + r_{ij} \sqrt{\lambda_j} = h_{ij},
\]
where \( 1 \leq i, j \leq d \). 
Given that \( \sqrt{\lambda_i} + \sqrt{\lambda_j} > 0 \), we obtain
\begin{equation*}
 r_{ij} = \frac{h_{ij}}{\sqrt{\lambda_i} + \sqrt{\lambda_j}}.   \end{equation*}
Since by hypothesis $\mathbf{D}$ enjoys uniform ellipticity, that is, its smallest eigenvalue $\lambda_{\min}>\alpha\geq0$, the precedent equality implies
\begin{equation}\label{Bound-1}
  |r_{ij}| \leq \frac{|h_{ij}|}{2\sqrt{\alpha}}.   
\end{equation}
Subsequently, from the fact that $\sigma^{\prime} =PRP^{\top}$, we obtain the equality
\begin{equation*}
    \partial_k \sigma_{ij}(x)=e_{i}^{\top}PRP^{\top}e_{j}=u^{\top}Rv,
\end{equation*}
where $e_i, e_j$ are members of the canonical basis of $\mathbb{R}^d$ and $u, v\in\mathbb{R}^d$ are orthonormal vectors. Therefore, for all $x\in\mathbb{R}^d$, we have that
\begin{equation}\label{Bound-2}
    |\partial_k \sigma_{ij}(x)|=|u^{\top}Rv|\leq||R||_{S}\leq||R||_{F}\leq d\,\Big(\max_{a,b} |r_{ab}|\Big),
\end{equation}
where \( 1 \leq a, b \leq d \). Analogously, since by definition $H = P^{\top} \mathbf{D}^{\prime} P$ for all $x\in\mathbb{R}^d$, we obtain
\begin{equation}\label{Bound-3}
    |h_{ij}(x)|\leqslant d\,\Big(\max_{m,n} \big|\partial_k \mathbf{D}_{mn}(x)\big|\Big),
\end{equation}
where \( 1 \leq m, n \leq d \). Using the first-order regularity hypothesis on $\mathbf{D}$, together with equations~\eqref{Bound-1}, \eqref{Bound-2}, and~\eqref{Bound-3}, we obtain the final estimate
\begin{equation}
    |\partial_k \sigma_{ij}(x)|\leqslant \frac{d^2}{2\sqrt{\alpha}}\,\Big(\max_{m,n} \big|\partial_k \mathbf{D}_{mn}(x)\big|\Big) \leqslant  \frac{d^2}{2\sqrt{\alpha}} M_k.
\end{equation}
Since it holds for all $x\in\mathbb{R}^d$ and all \( i,j,k \in \{1, \ldots, d\} \), this completes the proof.
\end{proof}

\begin{remark}\label{regularity2}
Clearly, hypothesis \((\mathrm{H1})\) implies the first-order regularity assumption in the statement of Lemma~\ref{regularity}.
\end{remark}

\begin{lemma}[Semigroup generation]\label{semigroup}
If the domain of $\mathcal{L}_0$ is defined as $D\left(\mathcal{L}_0\right):=C_{c}^{\infty}(\mathbb{R}^d)$ and assumptions (H1) and (H2) are satisfied, then the closure of the convection-diffusion operator $\mathcal{L}_0$ generates a strongly continuous contraction semigroup $\mathcal{T}(t)$ on $L^1(\mathbb{R}^d)$.
\end{lemma}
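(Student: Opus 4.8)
The plan is to verify the hypotheses of the Lumer--Phillips theorem. Since $D(\mathcal{L}_0)=C_c^{\infty}(\mathbb{R}^d)$ is dense in $L^1(\mathbb{R}^d)$, it suffices to establish (i) that $\mathcal{L}_0$ is dissipative on $L^1(\mathbb{R}^d)$, i.e.\ $\|(\lambda-\mathcal{L}_0)u\|_1\ge\lambda\|u\|_1$ for all $u\in C_c^{\infty}(\mathbb{R}^d)$ and $\lambda>0$, and (ii) that $\mathrm{Range}(\lambda_0-\mathcal{L}_0)$ is dense in $L^1(\mathbb{R}^d)$ for some $\lambda_0>0$. Granted both, Lumer--Phillips gives that $\overline{\mathcal{L}_0}$ is the generator of a strongly continuous contraction semigroup $\mathcal{T}(t)$ on $L^1(\mathbb{R}^d)$, which is the claim.

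For dissipativity I would reduce everything to the inequality $\int_{\mathbb{R}^d}(\mathcal{L}_0 u)\,\mathrm{sgn}(u)\,dx\le 0$, since then $\lambda\|u\|_1=\int(\lambda u-\mathcal{L}_0 u)\mathrm{sgn}(u)\,dx+\int(\mathcal{L}_0 u)\mathrm{sgn}(u)\,dx\le\|(\lambda-\mathcal{L}_0)u\|_1$. To prove this inequality rigorously I would regularize the sign: pick a smooth odd $\psi_\epsilon$ with $\psi_\epsilon'\ge 0$, $|\psi_\epsilon|\le 1$ and $\psi_\epsilon\to\mathrm{sgn}$ pointwise, note that $\psi_\epsilon(u)\in C_c^\infty(\mathbb{R}^d)$, and integrate $\int(\mathcal{L}_0 u)\psi_\epsilon(u)\,dx$ by parts. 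Writing $\mathcal{L}_0 u=\sum_{i,j}\partial_i\partial_j(\tfrac{1}{2}\mathbf{D}_{ij}u)-\sum_i\partial_i(\tilde b_i u)$, the second-order part yields $-\tfrac{1}{2}\int\psi_\epsilon'(u)\,(\nabla u)^{\top}\mathbf{D}(\nabla u)\,dx\le 0$ (using $\mathbf{D}=\sigma\sigma^{\top}\ge 0$ and $\psi_\epsilon'\ge 0$) plus a remainder $\tfrac{1}{2}\int\big(\sum_{i,j}\partial_i\partial_j\mathbf{D}_{ij}\big)\Phi_\epsilon(u)\,dx$, and the drift part yields $-\int(\nabla\cdot\tilde b)\,\Phi_\epsilon(u)\,dx$, where $\Phi_\epsilon(s):=\int_0^s\tau\psi_\epsilon'(\tau)\,d\tau$ satisfies $0\le\Phi_\epsilon(s)\le|s|$ and $\Phi_\epsilon\to 0$ pointwise. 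By (H1) the coefficients $\partial_i\partial_j\mathbf{D}_{ij}$ and $\nabla\cdot\tilde b$ are bounded, and since $u$ has compact support dominated convergence sends both remainders to $0$ as $\epsilon\to 0$, while $\int(\mathcal{L}_0 u)\psi_\epsilon(u)\to\int(\mathcal{L}_0 u)\mathrm{sgn}(u)$; this gives the claimed inequality, hence dissipativity. (Note that only $\mathbf{D}\ge 0$ and (H1) are needed here.)

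For the range condition I would argue by duality: $\mathrm{Range}(\lambda_0-\mathcal{L}_0)$ is dense iff every $\phi\in L^\infty(\mathbb{R}^d)=(L^1)^*$ that annihilates it vanishes, and such $\phi$ satisfies, in the sense of distributions, $\mathcal{A}\phi=\lambda_0\phi$ with $\mathcal{A}\phi:=\tfrac{1}{2}\sum_{i,j}\mathbf{D}_{ij}\partial_i\partial_j\phi+\sum_i\tilde b_i\partial_i\phi$, which is precisely the formal adjoint of $\mathcal{L}_0$ and the infinitesimal generator of the It\^o SDE $dX_t=\tilde b(X_t)\,dt+\sigma(X_t)\,dW_t$, with $\sigma$ the principal square root of $\mathbf{D}$. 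Under (H1)--(H2) this operator is uniformly elliptic with coefficients regular enough that interior Schauder/$L^p$ estimates promote $\phi$ to a $C^2_{\mathrm{loc}}$ (indeed smoother) classical solution of $\mathcal{A}\phi=\lambda_0\phi$. Since $\tilde b$ is globally Lipschitz by (H1) and $\sigma$ is globally Lipschitz by Lemma~\ref{regularity}, the SDE has a unique non-exploding strong solution; It\^o's formula then shows that $e^{-\lambda_0 t}\phi(X_t)-\phi(X_0)$ is a local martingale, and it is bounded (by $2\|\phi\|_\infty$) hence a true martingale, so $\phi(x)=\mathbb{E}_x\!\big[e^{-\lambda_0 t}\phi(X_t)\big]$ for every $t\ge 0$, whence $|\phi(x)|\le e^{-\lambda_0 t}\|\phi\|_\infty\to 0$ and $\phi\equiv 0$.

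The main obstacle is this last step. One must pass from a merely bounded distributional solution of the adjoint equation to a genuinely classical one---elliptic regularity with a possibly unbounded first-order coefficient---and then control the associated diffusion globally in time, which is exactly where uniform ellipticity (H2) and the quantitative Lipschitz bound on $\sigma$ furnished by Lemma~\ref{regularity} enter. The dissipativity estimate, by contrast, is a routine integration by parts, its only delicate point being the non-smoothness of the sign function, which the mollification $\psi_\epsilon$ handles. (A purely PDE-based Liouville argument for $\mathcal{A}\phi=\lambda_0\phi$ using a quadratic Lyapunov function $V(x)=1+|x|^2$ is also available, but the probabilistic route above is shorter.)
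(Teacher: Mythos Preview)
Your argument is correct, but it takes a genuinely different route from the paper's proof. The paper does not carry out any Lumer--Phillips computation at all: it simply observes that (H1), (H2), and Lemma~\ref{regularity} yield the global Lipschitz bound
\[
|\partial_k \tilde b_i(x)|+|\partial_k\sigma_{ij}(x)|\le K',
\]
and then invokes Theorem~3.1 and Lemma~3.2 of \cite{chen2018numerical} as a black box to conclude that $(\mathcal{L}_0,C_c^\infty)$ closes to the generator of a strongly continuous contraction semigroup on $L^1$. Your proposal, by contrast, is essentially a self-contained proof of what that reference supplies: the Kato-type dissipativity estimate via the mollified sign function (which only uses $\mathbf{D}\ge 0$ and the boundedness of $\partial_i\partial_j\mathbf{D}_{ij}$ and $\nabla\!\cdot\tilde b$ from (H1)), and the range condition via duality, interior elliptic regularity of the adjoint equation $\mathcal{A}\phi=\lambda_0\phi$, and the Feynman--Kac identity $\phi(x)=e^{-\lambda_0 t}\mathbb{E}_x[\phi(X_t)]$ for the non-exploding diffusion with coefficients $(\tilde b,\sigma)$. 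Both approaches ultimately lean on Lemma~\ref{regularity} in the same way---to pass from Lipschitz control of $\mathbf{D}$ to Lipschitz control of $\sigma$---but you use it to guarantee global well-posedness of the SDE, whereas the paper uses it merely to match the hypotheses of the cited theorem. Your argument is longer but transparent about where (H2) actually enters (only in the regularity/Liouville step, not in dissipativity); the paper's is a two-line citation.
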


\begin{proof}
The operator $\mathcal{L}_0$ has the standard Fokker-Planck form
\begin{equation}\label{FPO-2}
\mathcal{L}_0 \, f = -\sum_{i=1}^d \partial_i (\tilde{b}_i f) + \frac{1}{2} \sum_{i,j=1}^d \partial_i \partial_j (\mathbf{D}_{ij} f), \qquad \textrm{for all}\qquad f\in C_c^{\infty}(\mathbb{R}^d).
\end{equation}
From assumptions~(H1) and~(H2) along with Lemma~\ref{regularity} and Remark~\ref{regularity2}, and for some constant $K^{\prime} > 0$, we have that
    $$
    \big|\partial_k \tilde{b}_i(x)\big| + \big|\partial_k \sigma_{ij}(x)\big| \leq K^{\prime},
    $$
for all $x \in \mathbb{R}^d$ and all $i, j, k = 1, \ldots, d$. 
Therefore, applying Theorem~3.1 and Lemma~3.2 of~\cite{chen2018numerical} in~\eqref{CDE0}, we can conclude that $\left(\mathcal{L}_0,D\left(\mathcal{L}_0\right)
\right)$ generates a strongly continuous contraction semigroup on $(L^1(\mathbb{R}^d),||\cdot||_1)$, which is denoted by $\mathcal{T}(t)$.
\end{proof}

\begin{definition}
 Let us denote by $C\big([0,T];L^1(\mathbb{R}^d)\big)$ the space of all functions $u(x,t)$ such that $\sup_{t \in [0,T]} \|u(\cdot, t)\|_1 < \infty$ and, for all  $t \in [0,T], \|u(\cdot, s) - u(\cdot, t)\|_1 \longrightarrow 0$ as  $s \longrightarrow t$.
\end{definition}

\begin{definition}
A function $u\in C\big([0,T];L^1(\mathbb{R}^d)\big)$ is called a mild solution of equation~\eqref{CDE0} if
\begin{equation*}
     u(\cdot, t)= \int\limits_0^t \overline{\mathcal{L}_0}\,
     u(\cdot, s)\,\mathrm{d}s+u_0
     \qquad \text{in $L^1(\mathbb{R}^d)$ for all $0 \le t \le T$},
\end{equation*}
where $\overline{\mathcal{L}_0}$ is the closure of $\mathcal{L}_0$ in $L^1(\mathbb{R}^d)$ and the integral is a Bochner integral in $L^1(\mathbb{R}^d)$, possibly improper at the origin.
\end{definition}

\begin{theorem}[Relation between PDE and SDE]
\label{theorem-main}
Under assumptions \((\mathrm{H1})-(\mathrm{H3})\), the SDE~\eqref{SDE-ITO} admits a unique strong solution \(X_t\) on $[0,T]$ for any $T>0$. Furthermore, the convection-diffusion equation~\eqref{opeEquation} has a unique solution \(u \in C^1((0, T]; C(\mathbb{R}^d)) \cap C((0, T]; C^2(\mathbb{R}^d)) \cap C([0, T]; L^1(\mathbb{R}^d))\). In addition, this solution \(u(\cdot, t)\) of equation~\eqref{opeEquation} corresponds to the PDF of \(X_t\).
\end{theorem}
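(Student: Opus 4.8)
The plan is to prove the three assertions in sequence --- existence and uniqueness of a strong solution of~\eqref{SDE-ITO}, existence, uniqueness and regularity of the solution of~\eqref{opeEquation}, and the identification of the latter with the law of the former --- leaning on the preparatory lemmas at each stage. For the SDE, I would first record that its drift $b+\tfrac{1}{2}\nabla\cdot\mathbf{D}$ and its dispersion $\sigma$ are globally Lipschitz on $\mathbb{R}^d$: by (H1) and Remark~\ref{globalcont} the field $b$ is globally Lipschitz; since $\mathbf{D}\in C^5$ with bounded second derivatives, $\nabla\cdot\mathbf{D}$ has bounded first derivatives, hence is globally Lipschitz; and since $\mathbf{D}$ is symmetric, uniformly elliptic (H2) and has bounded first derivatives, Lemma~\ref{regularity} bounds $\partial_k\sigma_{ij}$ uniformly, so the principal square root $\sigma$ is globally Lipschitz as well. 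Global Lipschitz continuity gives the usual linear growth bound, and, since $Z$ and $\{W_t\}_{t\ge0}$ are independent by (H3), $W_t$ remains a Brownian motion in the filtration $\mathcal{F}_t=\mathcal{F}_t^W\vee\sigma(Z)$, so the classical It\^o existence--uniqueness theorem~\cite{oksendal} applies. The single subtlety is that (H3) does not make $Z$ square integrable; I would bypass this by solving~\eqref{SDE-ITO} from each deterministic starting point, using the joint measurability of the solution map, and composing with $Z$ (equivalently, localizing on $\{|Z|\le n\}$ and patching via pathwise uniqueness), which yields the unique strong solution $X_t$ on $[0,T]$.

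For the PDE, Lemma~\ref{lem:sfpf} shows that on classical solutions~\eqref{opeEquation} coincides with~\eqref{CDE0}, whose generator $\mathcal{L}_0$ is in standard Fokker--Planck form; by Lemma~\ref{semigroup}, $\overline{\mathcal{L}_0}$ generates a strongly continuous contraction semigroup $\mathcal{T}(t)$ on $L^1(\mathbb{R}^d)$, so $u(\cdot,t):=\mathcal{T}(t)u_0$ is the unique mild solution in $C([0,T];L^1(\mathbb{R}^d))$. Next I would invoke parabolic regularity for uniformly elliptic operators with smooth coefficients (H1)--(H2)~\cite{evanspde} to upgrade $u$ to a classical solution in $C^1((0,T];C(\mathbb{R}^d))\cap C((0,T];C^2(\mathbb{R}^d))$; equivalently, one constructs a smooth positive fundamental solution $p(t;x,y)$ of $\partial_t-\mathcal{L}_0$ --- localizing, if needed, to accommodate the unbounded coefficients --- and writes $u(y,t)=\int_{\mathbb{R}^d}p(t;x,y)\,u_0(x)\,\mathrm{d}x$, from which $u(\cdot,t)\ge0$ and $\|u(\cdot,t)\|_1=1$ (positivity of $p$ by the maximum principle, mass conservation by non-explosiveness). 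Uniqueness within the stated class is then automatic, since any classical solution lying in that class is in particular a mild solution, and mild solutions are unique by Lemma~\ref{semigroup}.

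For the identification, the diffusion theory recalled in Section~\ref{sec:cdequations} gives that $X_t$ is a Markov process with generator $\mathcal{L}^*$, so It\^o's formula applied to $\phi(X_t)$ with $\phi\in C_c^\infty(\mathbb{R}^d)$, after taking expectations, yields
\[
\mathbb{E}\big[\phi(X_t)\big]=\mathbb{E}\big[\phi(Z)\big]+\int_0^t\mathbb{E}\big[\mathcal{L}^*\phi(X_s)\big]\,\mathrm{d}s ,
\]
so that $t\mapsto\mu_t:=\mathrm{Law}(X_t)$ is a weak measure-valued solution of $\partial_t\mu=\mathcal{L}_0\mu$ with $\mu_0=u_0\,\mathrm{d}x$. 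Ellipticity and smoothness of the coefficients force $X_t$ to possess the transition density $p(t;x,\cdot)$, hence $\mu_t(\mathrm{d}y)=\big(\int_{\mathbb{R}^d}p(t;x,y)\,u_0(x)\,\mathrm{d}x\big)\,\mathrm{d}y$, and the function in brackets is exactly the $u(\cdot,t)$ built in the previous step, so $u(\cdot,t)$ is the PDF of $X_t$. I would also note the dual route that avoids the fundamental solution: $v(x,s):=(\mathcal{T}(t-s)^{\ast}\phi)(x)$ solves the backward equation $\partial_s v+\mathcal{L}^{\ast} v=0$ with terminal datum $\phi$, so It\^o makes $v(X_s,s)$ a martingale on $[0,t]$ and $\mathbb{E}[\phi(X_t)]=\mathbb{E}[v(Z,0)]=\langle\mathcal{T}(t)^{\ast}\phi,u_0\rangle=\langle\phi,\mathcal{T}(t)u_0\rangle=\langle\phi,u(\cdot,t)\rangle$; as $\phi$ is arbitrary, $\mathrm{Law}(X_t)$ has density $u(\cdot,t)$.

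I expect the main obstacle to lie in this identification step, since it must reconcile objects of genuinely different nature: the probabilistic solution, a priori only a weakly continuous curve of probability measures, and the analytic solution, an $L^1$-valued (indeed classical) function. Making them agree requires a uniqueness principle for the forward Kolmogorov equation strong enough to cover measure-valued solutions; under (H1)--(H2) this is available --- through the smoothness and Gaussian-type bounds of the fundamental solution, or through the $L^1$--$L^\infty$ duality between the semigroups generated by $\overline{\mathcal{L}_0}$ and its adjoint --- but both routes require care. A secondary, more technical difficulty is that the coefficients, although globally Lipschitz, need not be bounded, so the standard fundamental-solution constructions (and the regularity and decay of the dual solution $v$ needed to run It\^o's formula up to $s=t$) must be combined with the non-explosiveness of $X_t$ furnished by the linear growth bound.
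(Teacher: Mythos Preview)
Your proposal is correct and follows the same three-part skeleton as the paper's proof: global Lipschitz continuity of $b+\tfrac12\nabla\!\cdot\mathbf{D}$ and $\sigma$ via (H1), Remark~\ref{globalcont} and Lemma~\ref{regularity} for the SDE; Lemma~\ref{lem:sfpf} plus Lemma~\ref{semigroup} for the mild solution $u=\mathcal{T}(t)u_0$ and its upgrade to a classical solution; and then the Fokker--Planck identification. The only substantive difference is one of packaging: the paper outsources both the regularity upgrade and the identification step to a single reference (Remark~2.5 and Theorem~2.4 in~\cite{chen2018numerical}), whereas you spell these out --- parabolic regularity/fundamental solution for the former, It\^o's formula plus weak uniqueness (or the dual martingale argument) for the latter --- and you also flag the lack of an $L^2$ moment on $Z$, which the paper does not discuss. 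Your version is therefore more self-contained, at the price of having to justify the fundamental-solution bounds and measure-valued uniqueness under unbounded (but Lipschitz) coefficients, exactly the technical points you already identified as the main obstacles.
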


\begin{proof}
Assumptions \((\mathrm{H1})-(\mathrm{H3})\), by Remark~\ref{globalcont} in the present work, include the assumptions for the existence and uniqueness of a strong solution $X_t$ of Itô SDE~\eqref{SDE-ITO} as can be found in~\cite{protter2005stochastic} (see Theorem~7 in Chapter~V). On the other hand, from our Lemma~\ref{semigroup}, we know that the closure of $\mathcal{L}_0$ is the generator of a strongly continuous semigroup $\mathcal{T}(t)$ on $(L^1(\mathbb{R}^d),||\cdot||_1)$. Therefore, we define the mapping
\begin{equation*}
    u:t\longmapsto u(\cdot,t):=\mathcal{T}(t)u_0,
\end{equation*}
from where it follows that $u\in C\big([0,T];L^1(\mathbb{R}^d)\big)$ due to the strong continuity of $\mathcal{T}(t)$. Then, from Proposition II.6.4 in~\cite{engel2000one}, it follows that $u$ is the unique mild solution of~\eqref{CDE0}. Moreover, by Remark~2.5 in~\cite{chen2018numerical}, this solution is classical, thus \(u \in C^1((0, T]; C(\mathbb{R}^d)) \cap C((0, T]; C^2(\mathbb{R}^d)) \cap C([0, T]; L^1(\mathbb{R}^d))\). Besides, by our Lemma~\ref{lem:sfpf}, it is the unique solution to initial value problem~\eqref{opeEquation}. As a final step to conclude the proof, we use Theorem~2.4 in~\cite{chen2018numerical} to find that $u(\cdot,t)$ is the PDF of $X_t$.
\end{proof}

\section{Multidimensional Definition of the H\"anggi-Klimontovich Integral}
\label{sec:multidimensional}

The H\"anggi-Klimontovich integral is the name employed in part of the physical literature to denote a variant of the Stratonovich integral. The difference from the latter lies in the evaluation point of the integrand in each subinterval of the partition in the Riemann sum approximation: instead of using the midpoint, the right endpoint is selected. Although the mathematical structure of this integral was already examined in~\cite{escudero2023ito}, this section is devoted to analyze those of its properties that will be needed in the remainder of this work.

\begin{definition} An $\mathbb{R}^d\,$-valued Markov process $\{Y_t,\, 0\leq t \leq T\}$ is called a \emph{diffusion process} if it satisfies the following three conditions for every $t\in[0,T]$ and $x \in \mathbb{R}^d$, and any $\delta>0$:

\begin{enumerate}
	\item Continuity of sample paths:
	$$ \lim_{h\to0^+}\frac{1}{h}\, \mathbb{P} \,\, \Big({\big|Y_{t+h}-Y_t\big|>\delta}\,\big| Y_t=x\Big)=0.$$
	\item Existence of drift vector $b:\mathbb{R}^d\times[0,T]\longrightarrow \mathbb{R}^d$ such that
	$$\lim_{h\to0^+}\frac{1}{h}\, \mathbb{E} \,\,\Big(Y_{t+h}-Y_t\,\big| Y_t=x\Big)=b(x, t).$$
	\item Existence of diffusion matrix $\mathbf{D}:\mathbb{R}^d\times[0,T]\longrightarrow \mathcal{M}_{d\times d}(\mathbb{R})$ such that 
	$$\lim_{h\to0^+}\frac{1}{h}\, \mathbb{E} \,\,\Big((Y_{t+h}-Y_t\big)(Y_{t+h}-Y_t\big)^\top| Y_t=x\Big)=\mathbf{D}(x,t).$$
\end{enumerate}
\end{definition}

From now on, we will take $\mathbf{D}(x,t)$ to be uniformly elliptic (see the previous section) and $\sigma(x,t):\mathbb{R}^d\times[0,T] \longrightarrow \mathcal{M}_{d\times d}(\mathbb{R})$ to be the principal square root of $\mathbf{D}(x,t)$, and we will refer to this function as the \emph{amplitude of noise}.

\begin{definition} 	Let $b:\mathbb{R}^d\times[0,T] \longrightarrow \mathbb{R}^d$ and $\sigma(x,t):\mathbb{R}^d\times[0,T] \longrightarrow \mathcal{M}_{d\times d}(\mathbb{R})$ be two measurable functions. 

	\begin{itemize}
		\item (Lipschitz condition) It is said that $b$ and $\sigma$ satisfy the Lipschitz condition, in the first argument, if there exists a constant $C>0$ such that for all $x, y \in \mathbb{R}^d$ and $0 \leqslant t \leqslant T,$
		$$|b(x,t)-b(y,t)|\leqslant C|x-y|,\qquad ||\sigma(x,t)-\sigma(y,t)||_{F} \leqslant C|x-y|.$$
		\item (Linear growth condition). It is said that $b$ and $\sigma$ satisfy the linear growth condition, in the first argument, if there exists a constant $C'>0$ such that for all $x \in \mathbb{R}^d$ and $0 \leqslant t \leqslant T,$
		$$|b(x,t)| \leqslant C'(1+|x|),\qquad ||\sigma(x,t)||_{F} \leqslant C'(1+|x|).$$
	\end{itemize}
\end{definition}

\begin{definition}\label{def_HKI_multi}
Assume that $b(x,t)$ and $\sigma(x,t)=\big(\sigma_{ij}(x,t)\big)$ are continuous on $\mathbb{R}^d\times[0,T]$ and both satisfy the Lipschitz and linear growth conditions in $x$. Furthermore, consider a $d$-dimensional diffusion process $({Y}_t)_{t\geq 0}=(Y^{1}_t,\ldots,Y^{d}_t )_{t \geq 0}$, with drift vector $b(x,t)$ and noise amplitude $\sigma(x,t)$. For any $\Psi:\mathbb{R}^d\times[0,T]\longrightarrow\mathcal{M}_{d\times d}(\mathbb{R})$, for which all its partial derivatives $\partial_{i}\Psi(x,t)$ are continuous, the H\"anggi-Klimontovich integral of $\big(\Psi({Y}_t,t)\big)_{t \geq 0}=\big[\Psi_{ij}({Y}_t,t)\big]_{t \geq 0}$ in the interval $[0,T]$ with respect to the diffusion process $({Y}_t)_{\geq 0}$ is denoted by
\begin{equation}\label{HKI_multi}
    \int\limits_0^T \Psi(Y_t,t)\bullet \mathrm{d}Y_t
\end{equation}
and defined to be the $\mathbb{R}^d\,$-valued random variable whose $i$-th component is given by
\begin{equation}\label{def_HKI_i}
		\int\limits_0^T \Psi_{i*}(Y_t,t)\bullet Y_t :=\lim_{||\Delta_n||\to 0}\sum_{j=1}^{n}\Psi_{i*}(Y_{t_j},t_j)(Y_{t_j}-Y_{t_{j-1}}) \quad \text{in probability},
\end{equation}
where $\Delta_n=\{t_0, t_1, \ldots, t_{n-1}, t_n\}$ is a partition of $[0, T]$, and $||\Delta_n||=\max_{1\leq k \leq n}(t_k-t_{k-1})$.
\end{definition}
This integral is well-defined and its relation with the Itô integral is expressed in the following theorem.

\begin{theorem}\label{Theorem_RCM}
	Under the assumptions established in Definition~\ref{def_HKI_multi}, the limit in equation~\eqref{HKI_multi} exists. Furthermore, for $\mathbf{D}=\sigma \sigma^{\top}$, it is connected to the Itô integral through the following identity
	\begin{equation}\label{RCM}
		\int\limits_0^T \Psi(Y_t,t)\bullet \mathrm{d}Y_t=\int\limits_0^T \Psi(Y_t,t)\,\mathrm{d}Y_t+\sum_{l,k=1}^{d}\int\limits_0^T\big(\partial_{k}\Psi(Y_t,t)\big)_{* l} \mathbf{D}_{lk}(Y_t,t)\,\mathrm{d}t,
	\end{equation}
which holds almost surely. 
\end{theorem}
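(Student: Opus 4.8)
The plan is to reduce the statement to properties of the Itô calculus. First I would record that, under the hypotheses of Definition~\ref{def_HKI_multi}, the diffusion process $(Y_t)$ is the unique strong solution of the Itô SDE $\mathrm{d}Y_t=b(Y_t,t)\,\mathrm{d}t+\sigma(Y_t,t)\,\mathrm{d}W_t$, so that $\langle Y^k,Y^m\rangle_t=\int_0^t\mathbf{D}_{km}(Y_s,s)\,\mathrm{d}s$ and $\mathbb{E}\big[\sup_{t\le T}|Y_t|^p\big]<\infty$ for every $p\ge1$; this also guarantees that the two integrals on the right-hand side of~\eqref{RCM} are well defined. Fixing a partition $\Delta_n=\{t_0,\dots,t_n\}$ and writing $\Delta_j Y^m:=Y^m_{t_j}-Y^m_{t_{j-1}}$, I would split the $i$-th component of the sum in~\eqref{def_HKI_i} as
\[
\sum_{j=1}^{n}\sum_{m=1}^{d}\Psi_{im}(Y_{t_{j-1}},t_{j-1})\,\Delta_j Y^m
\;+\;\sum_{j=1}^{n}\sum_{m=1}^{d}\big[\Psi_{im}(Y_{t_j},t_j)-\Psi_{im}(Y_{t_{j-1}},t_{j-1})\big]\,\Delta_j Y^m
\;=:\;I_n^i+II_n^i.
\]
The term $I_n^i$ is a left-endpoint Riemann sum, so by the standard construction of the Itô integral — Riemann-sum convergence for the part against $b\,\mathrm{d}t$, and Itô's isometry together with dominated convergence (using continuity of $\Psi$, continuity of $Y$, and the moment bounds above) for the part against $\sigma\,\mathrm{d}W$ — it converges in probability to the $i$-th component of $\int_0^T\Psi(Y_t,t)\,\mathrm{d}Y_t$.

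The heart of the proof is to show that $II_n^i$ converges in probability to $\sum_{l,k=1}^d\int_0^T\partial_k\Psi_{il}(Y_t,t)\,\mathbf{D}_{lk}(Y_t,t)\,\mathrm{d}t$. I would localize on the events $\{\sup_{t\le T}|Y_t|\le N\}$, on which $\Psi$ and $\nabla\Psi$ are uniformly continuous, and let $N\to\infty$ at the end. For each $j$ I would split the increment of $\Psi_{im}$ into the pure time increment $\Psi_{im}(Y_{t_{j-1}},t_j)-\Psi_{im}(Y_{t_{j-1}},t_{j-1})$ and the pure space increment $\Psi_{im}(Y_{t_j},t_j)-\Psi_{im}(Y_{t_{j-1}},t_j)$. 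The time increments contribute a sum whose prefactors are $\mathcal{F}_{t_{j-1}}$-measurable and $o(1)$ uniformly in $j$ (continuity of $\Psi$ in $t$), so an $L^2$-estimate using $\mathbb{E}\big[(\Delta_j Y^m)^2\,|\,\mathcal{F}_{t_{j-1}}\big]=O(t_j-t_{j-1})$ and $\sum_j(t_j-t_{j-1})=T$ shows it tends to $0$ in probability. For the space increment I would use the first-order Taylor expansion with integral remainder along the segment from $Y_{t_{j-1}}$ to $Y_{t_j}$, namely
\[
\Psi_{im}(Y_{t_j},t_j)-\Psi_{im}(Y_{t_{j-1}},t_j)=\sum_{k=1}^d\partial_k\Psi_{im}(Y_{t_{j-1}},t_j)\,\Delta_j Y^k+\rho^{im}_j,
\qquad |\rho^{im}_j|\le\varepsilon_j\,|Y_{t_j}-Y_{t_{j-1}}|,
\]
where $\varepsilon_j$ is a modulus-of-continuity quantity for $\nabla\Psi$. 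The remainder contributes $\sum_{j,m}\rho^{im}_j\,\Delta_j Y^m$, bounded by $\big(\max_j\varepsilon_j\big)\sum_j|Y_{t_j}-Y_{t_{j-1}}|^2$; since $Y$ has continuous paths $\max_j|Y_{t_j}-Y_{t_{j-1}}|\to0$, hence $\max_j\varepsilon_j\to0$, while $\sum_j|Y_{t_j}-Y_{t_{j-1}}|^2$ converges to $\sum_k\langle Y^k\rangle_T$ and thus stays bounded in probability, so this term vanishes.

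It then remains to treat $\sum_{j,m,k}\partial_k\Psi_{im}(Y_{t_{j-1}},t_j)\,\Delta_j Y^k\,\Delta_j Y^m$. Replacing $t_j$ by $t_{j-1}$ in the prefactor introduces another uniformly $o(1)$ error, again absorbed using boundedness of $\sum_j|Y_{t_j}-Y_{t_{j-1}}|^2$. Then I would invoke the standard fact that for a bounded continuous adapted integrand $g$,
\[
\sum_{j}g(Y_{t_{j-1}},t_{j-1})\Big[\Delta_j Y^k\,\Delta_j Y^m-\big(\langle Y^k,Y^m\rangle_{t_j}-\langle Y^k,Y^m\rangle_{t_{j-1}}\big)\Big]\longrightarrow 0
\]
in probability, together with the Riemann-sum convergence of $\sum_j g(Y_{t_{j-1}},t_{j-1})\big(\langle Y^k,Y^m\rangle_{t_j}-\langle Y^k,Y^m\rangle_{t_{j-1}}\big)$ to $\int_0^T g(Y_s,s)\,\mathrm{d}\langle Y^k,Y^m\rangle_s=\int_0^T g(Y_s,s)\,\mathbf{D}_{km}(Y_s,s)\,\mathrm{d}s$. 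Summing over $m$ and $k$, using the symmetry $\mathbf{D}_{km}=\mathbf{D}_{mk}$ and the column notation $(\partial_k\Psi)_{*l}$, yields exactly the claimed correction term. Combining this with the first paragraph shows that the sums in~\eqref{def_HKI_i} converge in probability — so the limit defining the multidimensional HK integral exists — and that their limit is, componentwise, the right-hand side of~\eqref{RCM}; since limits in probability are unique up to a null set, \eqref{RCM} holds almost surely.

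The main obstacle is the analysis of $II_n^i$: because only the first spatial derivatives of $\Psi$ are assumed continuous, Itô's formula cannot be applied to $\Psi_{im}(Y_t,t)$ — indeed this process need not even be a semimartingale — so the covariation-type limit must be extracted by hand. The delicate points are keeping the localization clean, keeping every error term uniform in $j$, and exploiting the $\mathcal{F}_{t_{j-1}}$-measurability of the prefactors so that the finite total length $\sum_j(t_j-t_{j-1})=T$ and the bounded quadratic variation $\sum_j|Y_{t_j}-Y_{t_{j-1}}|^2$ can be used to annihilate the errors, all while working with convergence in probability rather than almost surely or in $L^2$.
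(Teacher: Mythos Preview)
The paper does not give its own proof of this theorem; it simply states that ``the proof of Theorem~\ref{Theorem_RCM} can be found in~\cite{escudero2023ito}.'' So there is no in-paper argument to compare against. Your proposal follows the standard route for converting a right-endpoint Riemann sum into an It\^o integral plus a covariation correction: split off the left-endpoint sum, Taylor-expand the spatial increment of $\Psi$ to first order, identify the resulting bilinear increments $\Delta_jY^k\,\Delta_jY^m$ with the quadratic covariation $\mathrm{d}\langle Y^k,Y^m\rangle=\mathbf{D}_{km}\,\mathrm{d}t$, and control all remainders by localization plus the bounded-quadratic-variation trick. This is precisely the mechanism behind Lemmas~7.2.1 and~7.2.3 in Kuo that the paper invokes elsewhere, and it is almost certainly the argument carried out in the cited reference. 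Your handling of the localization, the Taylor remainder, and the passage from $\Delta_jY^k\,\Delta_jY^m$ to $\mathbf{D}_{km}\,\mathrm{d}t$ is sound.

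One small point to tighten: Definition~\ref{def_HKI_multi} only hypothesizes continuity of the spatial partials $\partial_i\Psi$, not joint continuity of $\Psi$ in $(x,t)$, yet your ``pure time increment'' step $\Psi_{im}(Y_{t_{j-1}},t_j)-\Psi_{im}(Y_{t_{j-1}},t_{j-1})=o(1)$ uses continuity of $\Psi$ in $t$. Either add this as an explicit standing hypothesis (it is surely intended, and is assumed in~\cite{escudero2023ito}) or, more robustly, swap the order of your decomposition so the spatial increment is taken at the earlier time $t_{j-1}$; then the residual time increment is evaluated at $Y_{t_j}$ and can be folded into the step where you already replace $t_j$ by $t_{j-1}$ in the prefactor $\partial_k\Psi_{im}$, which does rely only on the stated continuity of $\partial_k\Psi$.
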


The proof of Theorem~\ref{Theorem_RCM} can be found in~\cite{escudero2023ito}.

\begin{proposition}\label{sigma}
Let $X_t$ be a $d$-dimensional diffusion process with drift $b(x,t)$, noise amplitude $\sigma(x,t)$, and such that $X_0 =x_0 \in \mathbb{R}^d$; let also $W_t$ be a $d$-dimensional Brownian motion. Assume that $b(x,t)$ and $\sigma(x,t)$ obey the same assumptions as in Definition~\ref{def_HKI_multi} and, moreover, that all partial derivatives $\partial_{i}\sigma(x,t)$ are continuous. Then the identity 
\begin{equation}\label{equi_main}
		\int\limits_0^T \sigma(X_t,t)\bullet \mathrm{d}W_t=\int\limits_0^T \sigma(X_t,t)\,\mathrm{d}W_t+\sum_{k, l=1}^{d}\int\limits_0^T\big(\partial_{k}\sigma(X_t,t)\big)_{* l} \sigma_{kl}(X_t,t)\,\mathrm{d}t
	\end{equation}
holds almost surely.
\end{proposition}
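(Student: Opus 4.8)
The plan is to deduce~\eqref{equi_main} from the H\"anggi--Klimontovich--It\^o conversion formula of Theorem~\ref{Theorem_RCM} by lifting the integral to a higher-dimensional diffusion. A direct application is not possible because the integrand $\sigma(X_t,t)$ depends on $X_t$ while the integration in~\eqref{equi_main} is against $\mathrm{d}W_t$, whereas Definition~\ref{def_HKI_multi} only covers integrals of a function of a diffusion against the increments of that same diffusion. To circumvent this I would work with the $\mathbb{R}^{2d}$-valued process $\mathbf{Y}_t:=(X_t,W_t)$. Since, by hypothesis, $X_t$ is the diffusion with drift $b$ and amplitude $\sigma$ driven by $W_t$ ---equivalently, $\mathrm{d}X_t=b(X_t,t)\,\mathrm{d}t+\sigma(X_t,t)\,\mathrm{d}W_t$ in the It\^o sense--- the pair $\mathbf{Y}_t$ is a diffusion process whose drift is the augmented vector $(b(x,t),0)$ and whose diffusion matrix $\widetilde{\mathbf{D}}$ has block structure with $\mathbf{D}=\sigma\sigma^{\top}$ in the $X$--$X$ block, $\sigma$ in the $X$--$W$ block, $\sigma^{\top}$ in the $W$--$X$ block, and $\mathbb{I}_d$ in the $W$--$W$ block; the off-diagonal blocks arise from $\mathrm{d}[X^k,W^m]_t=\sigma_{km}(X_t,t)\,\mathrm{d}t$.

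Next I would introduce the integrand $\widehat{\Psi}:\mathbb{R}^{2d}\times[0,T]\to\mathcal{M}_{2d\times 2d}(\mathbb{R})$ defined so that $\widehat{\Psi}(x,w,t)$ carries the $d\times d$ block $\sigma(x,t)$ in its $X$--$W$ position and vanishes everywhere else. With this choice one has, at the level of Riemann sums, $\widehat{\Psi}_{i*}(\mathbf{Y}_{t_j},t_j)\big(\mathbf{Y}_{t_j}-\mathbf{Y}_{t_{j-1}}\big)=\sigma_{i*}(X_{t_j},t_j)\big(W_{t_j}-W_{t_{j-1}}\big)$ for $1\le i\le d$, so the first $d$ components of $\int_0^T\widehat{\Psi}(\mathbf{Y}_t,t)\bullet\mathrm{d}\mathbf{Y}_t$ are precisely $\int_0^T\sigma(X_t,t)\bullet\mathrm{d}W_t$, and likewise the first $d$ components of $\int_0^T\widehat{\Psi}(\mathbf{Y}_t,t)\,\mathrm{d}\mathbf{Y}_t$ are $\int_0^T\sigma(X_t,t)\,\mathrm{d}W_t$. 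Before invoking Theorem~\ref{Theorem_RCM} one checks that the hypotheses of Definition~\ref{def_HKI_multi} transfer to $\mathbf{Y}_t$ and $\widehat{\Psi}$: the augmented drift and amplitude are continuous and satisfy the Lipschitz and linear-growth conditions in $(x,w)$ because $b$ and $\sigma$ do in $x$ and the remaining blocks are constant, while all first partials of $\widehat{\Psi}$ are continuous because those of $\sigma$ are, so that only the $C^1$ regularity assumed on $\sigma$ is used.

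It then remains to apply Theorem~\ref{Theorem_RCM} to $\mathbf{Y}_t$ and $\widehat{\Psi}$ and to read off components $1,\dots,d$. The correction term of~\eqref{RCM} becomes the projection onto the first $d$ components of $\sum_{k,l=1}^{2d}\int_0^T\big(\partial_k\widehat{\Psi}(\mathbf{Y}_t,t)\big)_{*l}\,\widetilde{\mathbf{D}}_{lk}(\mathbf{Y}_t,t)\,\mathrm{d}t$. In this sum only $k\in\{1,\dots,d\}$ contributes, since $\widehat{\Psi}$ does not depend on $w$, and among those only $l\in\{d+1,\dots,2d\}$ contributes, since the first $d$ columns of $\widehat{\Psi}$, hence of $\partial_k\widehat{\Psi}$, vanish; writing $l=d+l'$ one has $\big(\partial_k\widehat{\Psi}\big)_{*,\,d+l'}=\big((\partial_k\sigma)_{*l'},\,0\big)$ and $\widetilde{\mathbf{D}}_{d+l',\,k}=\sigma_{kl'}$, so the surviving terms contribute exactly $\sum_{k,l=1}^{d}\int_0^T\big(\partial_k\sigma(X_t,t)\big)_{*l}\,\sigma_{kl}(X_t,t)\,\mathrm{d}t$ to the first $d$ components. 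Collecting the It\^o part, the H\"anggi--Klimontovich part, and this correction yields~\eqref{equi_main}.

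The main obstacle is technical: $\widetilde{\mathbf{D}}$ is only positive semidefinite of rank $d$, so $\mathbf{Y}_t$ is a degenerate diffusion whose natural amplitude $\widetilde{\sigma}$ ---the $2d\times d$ matrix obtained by stacking $\sigma$ above $\mathbb{I}_d$--- is not the symmetric principal square root of $\widetilde{\mathbf{D}}$. One therefore has to confirm that Theorem~\ref{Theorem_RCM}, and the well-posedness of the integral in Definition~\ref{def_HKI_multi}, do not secretly rely on uniform ellipticity or on the principal-square-root normalization; their stated hypotheses are only continuity, the Lipschitz and linear-growth bounds, and continuity of the spatial partials of the integrand, all of which hold here, and if one insists on the square-matrix formalism one may regard $\mathbf{Y}_t$ as driven by a $2d$-dimensional Brownian motion whose last $d$ components enter with zero coefficient. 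Everything else ---the transfer of the regularity hypotheses and the block bookkeeping--- is routine. A self-contained alternative would split $\sum_j\sigma(X_{t_j},t_j)(W_{t_j}-W_{t_{j-1}})=\sum_j\sigma(X_{t_{j-1}},t_{j-1})(W_{t_j}-W_{t_{j-1}})+\sum_j\big[\sigma(X_{t_j},t_j)-\sigma(X_{t_{j-1}},t_{j-1})\big](W_{t_j}-W_{t_{j-1}})$, identify the first limit with the It\^o integral and the second with $\sum_m[\sigma_{\cdot m}(X_\cdot,\cdot),W^m]_T$, and compute that covariation; but this appears to require $\sigma\in C^2$ to invoke It\^o's formula, which is stronger than assumed, so I would favor the lifting argument above.
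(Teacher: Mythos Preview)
Your proposal is correct and follows essentially the same route as the paper: lift to the $2d$-dimensional diffusion $(X_t,W_t)$, embed $\sigma$ as a block of an augmented integrand, apply Theorem~\ref{Theorem_RCM}, and then observe that only the $k\le d$, $l>d$ terms survive in the correction sum. The paper uses a $d\times 2d$ integrand $(\mathbf{0},\sigma)$ rather than your $2d\times 2d$ block matrix, and does not pause over the degeneracy of the lifted diffusion, but the argument is otherwise identical.
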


\begin{proof}
Choose $Y_t=(X_t,W_t)$ and $\Psi(x,t)=(\mathbf{0},\sigma(x,t))$, where $\mathbf{0}$ is a null matrix with dimension $d \times d$, such that
\begin{equation}\label{equi1}
	\int\limits_{0}^{T} \Psi(Y_t,t)\bullet \mathrm{d}Y_t=\int\limits_{0}^{T}\big(\mathbf{0}, \sigma(X_t,t)\big)\bullet \mathrm{d} \begin{pmatrix} X_t  \\ W_t \end{pmatrix}=\int\limits_{0}^{T} \sigma(X_t,t)\bullet \mathrm{d}W_t.
\end{equation}
Under the stated assumptions, the process $X_t$ can be written in the form~\cite{stroock1979}:

\begin{equation}    X_t=x_0+\int\limits_{0}^{T}u(X_t,t)\,\mathrm{d}t + \int\limits_{0}^{T}\sigma(X_t,t)\,\mathrm{d}W_t.
\end{equation}
In this case, by equations~(10) and~(11) in~\cite{stratonovich}, the diffusion coefficient of the process $Y_t$ is 
\begin{equation}
    A(x,t)=\big[a_{ij}(x,t)\big]= \begin{bmatrix}
\mathbf{D} & \sigma \\
\sigma^\top & \mathrm{I}
\end{bmatrix},
\end{equation}
where $\mathrm{I}$ is the identity matrix with dimension $d \times d$. Therefore, the transformation formula~\eqref{RCM} of Theorem~\ref{Theorem_RCM} applied to equation~\eqref{equi1} yields
\begin{equation}\label{equi2}
\int\limits_{0}^T \sigma(X_t, t)\bullet \mathrm{d}W_t = \int\limits_{0}^T \sigma(X_t,t)\, \mathrm{d}W_t +\sum_{l=1}^{2d}\sum_{k=1}^{2d} \int\limits_{0}^T \big(\mathbf{0},\partial_{k}\sigma(X_t,t)\big)_{*l}\,a_{lk}\,\mathrm{d}t.
\end{equation}
Note that $\partial_{k}\sigma(x,t)=\big[\partial_{k}\sigma_{ij}\big]$ is a null matrix for $ d+1\leq k \leq 2d$. Furthermore, the vector $\big(\partial_{k}\Psi\big)_{*l}=\big(\mathbf{0},\partial_{k}\sigma(X_t,t)\big)_{*l}$ is a null vector for $ 1\leq l \leq d$. Finally, replace these two observations in equation~\eqref{equi2} to find that the identity~\eqref{equi_main} is verified.
\end{proof}

\section{H\"anggi-Klimontovich Calculus}
\label{hkcalculus}

As one of our present goals is to connect the integral introduced in the previous section with PDEs, herein we perform an intermediate step: we connect it with the Itô integral in the context of stochastic differential equations. We start introducing the concept of solution.

\begin{definition}
We say that a $d$-dimensional diffusion process $X_t$ that satisfies a stochastic integral equation of the form
\begin{equation}
X_t= Z +\int\limits_{0}^t b(X_s, s)\,\mathrm{d}s+\int\limits_{0}^t \sigma(X_s, s)\bullet\mathrm{d}W_s,
\end{equation}
for $0 \le t \le T$, some positive real number $T$, some $\mathcal{F}_0$-measurable random variable $Z$ whose distribution is absolutely continuous with respect to Lebesgue measure, and some functions $b$ and $\sigma$ such that all the expressions are well defined,
is a solution to the H\"anggi-Klimontovich stochastic differential equation (HK SDE)
\begin{equation}\label{HK-SDE}
\mathrm{d}X_t= b(X_t, t)\,\mathrm{d}t+\sigma(X_t, t)\bullet\mathrm{d}W_t, \qquad X_0= Z.
\end{equation}
\end{definition}

\begin{remark}
As in Section~\ref{sec:cdequations}, we take $\{\mathcal{F}_t\}_{t \ge 0}$ to be the initially enlarged filtration $\mathcal{F}_t=\mathcal{F}_t^W \vee \sigma(Z)$ and $Z$ to be independent of $\{W_t\}_{t \ge 0}$.
\end{remark}

\begin{theorem}[Conversion rule for SDEs]\label{proptrans}
Let $b
:\mathbb{R}^d\times[0, T] \longrightarrow \mathbb{R}^d$ and  $\sigma:\mathbb{R}^d\times[0, T] \longrightarrow \mathcal{M}_{d\times d}(\mathbb{R})$ be functions as those specified in Proposition~\ref{sigma}. Furthermore, assume that the function $h:=\nabla\cdot\mathbf{D}-\sigma\,\nabla\cdot\sigma^\top$ satisfies the Lipschitz and linear growth conditions in $x$. Then, the unique solution to the It\^o SDE
\begin{equation}\label{transformation_SDE}
\mathrm{d}X_t= \Big[b(X_t, t)+h(X_t,t)\Big]\,\mathrm{d}t+ \sigma(X_t, t)\,\mathrm{d}W_t, \qquad X_0=Z,
\end{equation}
solves equation~\eqref{HK-SDE} almost surely. Correspondingly, there almost surely exists a unique solution to~\eqref{HK-SDE} that is given
by the solution to~\eqref{transformation_SDE}. 
\end{theorem}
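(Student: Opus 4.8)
The plan is to reduce the statement about HK SDE~\eqref{HK-SDE} to a statement about the Itô SDE~\eqref{transformation_SDE} by substituting the conversion identity of Proposition~\ref{sigma} into the defining integral equation, and then to invoke the classical Itô existence-uniqueness theory. First I would observe that, under the stated hypotheses, the coefficients of~\eqref{transformation_SDE} — namely $b+h$ and $\sigma$ — satisfy the Lipschitz and linear growth conditions in $x$ (the Lipschitz property of $h$ is assumed, those of $b$ and $\sigma$ come from Proposition~\ref{sigma}, and sums of such functions retain both properties), so by the standard Itô theorem (e.g. Theorem~7 in Chapter~V of~\cite{protter2005stochastic}, as already used in the proof of Theorem~\ref{theorem-main}) equation~\eqref{transformation_SDE} admits a unique strong solution $X_t$ on $[0,T]$.

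Next I would take that solution $X_t$ and verify it solves~\eqref{HK-SDE}. Writing out the integral form of~\eqref{transformation_SDE},
\[
X_t = Z + \int_0^t \big[b(X_s,s)+h(X_s,s)\big]\,\mathrm{d}s + \int_0^t \sigma(X_s,s)\,\mathrm{d}W_s,
\]
I would apply Proposition~\ref{sigma} on each subinterval $[0,t]$ to rewrite the Itô stochastic integral as the HK integral minus the correction term:
\[
\int_0^t \sigma(X_s,s)\,\mathrm{d}W_s = \int_0^t \sigma(X_s,s)\bullet\mathrm{d}W_s - \sum_{k,l=1}^d \int_0^t \big(\partial_k\sigma(X_s,s)\big)_{*l}\,\sigma_{kl}(X_s,s)\,\mathrm{d}t.
\]
The key algebraic identity is that the correction term equals $\int_0^t (\sigma\,\nabla\cdot\sigma^\top)(X_s,s)\,\mathrm{d}s$: indeed $\sum_{k,l}(\partial_k\sigma)_{*l}\sigma_{kl} = \sum_{k,l}(\partial_k\sigma)_{*l}(\sigma^\top)_{lk}$, whose $i$-th component is $\sum_{k,l}\partial_k\sigma_{il}\,\sigma^\top_{lk} = \sum_l \sigma_{il}\sum_k \partial_k\sigma^\top_{lk}$... wait, more carefully, $\sum_{k,l}\partial_k\sigma_{il}\sigma^\top_{lk}$; rearranging, this is $\sum_l\big(\sum_k \partial_k\sigma_{il}\,\sigma^\top_{lk}\big)$, and recognizing $\sum_k\partial_k(\text{something})$ is not quite $\nabla\cdot\sigma^\top$ unless the derivative hits the right index — so here one uses $(\sigma\,\nabla\cdot\sigma^\top)_i = \sum_l \sigma_{il}(\nabla\cdot\sigma^\top)_l = \sum_l\sigma_{il}\sum_k\partial_k\sigma^\top_{lk} = \sum_{l,k}\sigma_{il}\partial_k\sigma_{kl}$. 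Matching this against $\sum_{k,l}\partial_k\sigma_{il}\sigma_{kl}$ requires care with which factor is differentiated; since the correction term in~\eqref{equi_main} has the derivative on $\sigma$ and the undifferentiated $\sigma_{kl}$, and $(\sigma\nabla\cdot\sigma^\top)_i$ has the derivative on the second factor, these are genuinely different and one must confirm the intended reading of the notation $\big(\partial_k\sigma\big)_{*l}$ — it is the $l$-th column of $\partial_k\sigma$, i.e.\ the vector with $i$-th entry $\partial_k\sigma_{il}$, so the correction term's $i$-th component is $\sum_{k,l}\partial_k\sigma_{il}\,\sigma_{kl}$. I would then note that by the product rule $\nabla\cdot\mathbf{D} = \nabla\cdot(\sigma\sigma^\top)$ has $i$-th component $\sum_{k}\partial_k(\sigma\sigma^\top)_{ik} = \sum_{k,l}\partial_k(\sigma_{il}\sigma_{kl}) = \sum_{k,l}(\partial_k\sigma_{il})\sigma_{kl} + \sum_{k,l}\sigma_{il}(\partial_k\sigma_{kl})$, and the last sum is exactly $(\sigma\,\nabla\cdot\sigma^\top)_i$; hence the correction term equals $(\nabla\cdot\mathbf{D} - \sigma\,\nabla\cdot\sigma^\top)_i = h_i$ componentwise. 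Substituting this back collapses $b+h$ minus $h$ to just $b$, yielding precisely the HK integral equation defining a solution to~\eqref{HK-SDE}.

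For the converse — that any solution of~\eqref{HK-SDE} is given by the solution of~\eqref{transformation_SDE} — I would run the same computation in reverse: if $X_t$ solves~\eqref{HK-SDE}, then it is by definition a diffusion process with the stated drift and amplitude, so Proposition~\ref{sigma} applies to it verbatim; converting its HK integral to an Itô integral and absorbing the correction term $h$ into the drift shows $X_t$ satisfies~\eqref{transformation_SDE}, and uniqueness for the latter forces $X_t$ to coincide a.s.\ with the already-constructed solution. The main obstacle I anticipate is bookkeeping rather than conceptual: one must check that Proposition~\ref{sigma} genuinely applies to any HK solution (in particular that such a solution, being a diffusion process with amplitude $\sigma$ satisfying the Lipschitz/growth conditions, automatically has $\partial_i\sigma$ continuous — which follows from $\sigma$ being the principal square root of the assumed-regular $\mathbf{D}$ via Lemma~\ref{regularity}), and that the index contraction in the correction term is correctly identified with $\sigma\,\nabla\cdot\sigma^\top$ so that the cancellation with $h$ is exact. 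No deep estimate is needed beyond what Proposition~\ref{sigma} and the Itô theorem already supply.
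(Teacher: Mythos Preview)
Your proposal is correct and follows essentially the same route as the paper: invoke the standard It\^o existence-uniqueness theorem for~\eqref{transformation_SDE}, apply the conversion identity~\eqref{equi_main} from Proposition~\ref{sigma}, identify the correction term $\sum_{k,l}(\partial_k\sigma_{il})\sigma_{kl}$ with $h=\nabla\cdot\mathbf{D}-\sigma\,\nabla\cdot\sigma^\top$ via the product rule on $\nabla\cdot(\sigma\sigma^\top)$, and argue uniqueness by reversing the conversion. The only cosmetic difference is that the paper packages the index computation using the double-dot notation $h=\nabla\sigma:\sigma^\top$ before proving $\nabla\cdot\mathbf{D}=\nabla\sigma:\sigma^\top+\sigma\,\nabla\cdot\sigma^\top$, whereas you work directly in components; also, your initial sentence ``the correction term equals $\int_0^t(\sigma\,\nabla\cdot\sigma^\top)$'' is a misstatement that you immediately correct in the ensuing computation---in a clean write-up just state from the outset that the correction term is $h$, not $\sigma\,\nabla\cdot\sigma^\top$.
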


\begin{proof}
Under the stated assumptions, the It\^o SDE~\eqref{transformation_SDE} has a unique solution~\cite{protter2005stochastic}. Now, to prove the equivalence between SDEs~\eqref{HK-SDE} and~\eqref{transformation_SDE}, we write the identity~\eqref{equi_main} using the stochastic differential notation
\begin{equation}\label{convert_1}
		 \sigma(X_t,t)\bullet \mathrm{d}W_t= \sigma(X_t,t)\,\mathrm{d}W_t+\Bigg[\sum_{k, l=1}^{d}\big(\partial_{k}\sigma(X_t,t)\big)_{* l} \sigma_{kl}(X_t,t)\Bigg]\,\mathrm{d}t.
	\end{equation}
If we denote by $h$ the factor inside the square brackets on the right hand side of~\eqref{convert_1}, then its $i$-th component is given by
\begin{eqnarray}\nonumber
[h]_i&=&\sum_{l=1}^{d}\sum_{k=1}^{d}\big(\partial_k\sigma_{il}\big)\sigma_{kl}\\ \nonumber
    &=&{\big[\nabla\,\sigma:\sigma^\top\big]}_i,
\end{eqnarray}
from where we obtain the following equality
\begin{equation}\label{convert_2}
    h(x,t)=\nabla\sigma:\sigma^\top.
\end{equation}
On the other hand, we have 
\begin{eqnarray}\nonumber
[\nabla\cdot\mathbf{D}]_i&=&\big[\nabla\cdot\big(\sigma \sigma^\top\big)\big]_i\\ \nonumber &=&\sum_{k=1}^{d}\Bigg( \partial_k\bigg[\sum_{l=1}^{d}\sigma_{il}\sigma_{kl}\bigg]_{ik}\Bigg)\\ \nonumber
&=&\sum_{l=1}^{d}\sum_{k=1}^{d}\big(\partial_k\sigma_{il}\big)\sigma_{kl}+\sum_{l=1}^{d}\sigma_{il}\Bigg(\sum_{k=1}^{d}\partial_k\sigma_{kl}\Bigg)\\ \nonumber
  &=&{\big[\nabla\sigma:\sigma^\top\big]}_i+{\big[\sigma\,\nabla\cdot\sigma^\top\big]}_i,
\end{eqnarray}
so we have another equality
\begin{equation}\label{convert_3}
\nabla\cdot\mathbf{D}= \nabla\sigma:\sigma^\top + \sigma\,\nabla\cdot\sigma^\top.
\end{equation}
Equation~\eqref{transformation_SDE} is easily deduced from identities~\eqref{convert_1}, \eqref{convert_2}, and~\eqref{convert_3}. Therefore, we conclude that the unique solution of~\eqref{transformation_SDE} solves~\eqref{HK-SDE} almost surely. To check uniqueness assume there exist two solutions; by the same formula this enters in contradiction with the uniqueness of solution to equation~\eqref{transformation_SDE}.
\end{proof}

\begin{corollary}\label{Corollary-main} 
Under the hypotheses established in Theorem~\ref{proptrans} and the additional assumption $\,\sigma\,{\nabla\cdot\sigma}^\top=\frac{1}{2}\nabla\cdot\mathbf{D}$, there exists a unique solution to the It\^o SDE
\begin{equation}\label{transformation_SDE_simple}
\mathrm{d}X_t= \bigg[b(X_t, t)+\frac{1}{2}\nabla\cdot\mathbf{D}(X_t,t)\bigg]\,\mathrm{d}t+ \sigma(X_t, t)\,\mathrm{d}W_t, \qquad X_0=Z
\end{equation}
that moreover solves equation~\eqref{HK-SDE} almost surely. Correspondingly, there almost surely exists a unique solution to~\eqref{HK-SDE} that is given by the solution to~\eqref{transformation_SDE_simple}.
\end{corollary}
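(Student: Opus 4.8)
The plan is to derive Corollary~\ref{Corollary-main} as an immediate specialization of Theorem~\ref{proptrans}. The key identity is~\eqref{convert_3}, namely $\nabla\cdot\mathbf{D}=\nabla\sigma:\sigma^\top+\sigma\,\nabla\cdot\sigma^\top$, which holds unconditionally. Combining this with the additional hypothesis $\sigma\,\nabla\cdot\sigma^\top=\tfrac12\nabla\cdot\mathbf{D}$, I would first solve for the remaining term: subtracting gives $\nabla\sigma:\sigma^\top=\nabla\cdot\mathbf{D}-\sigma\,\nabla\cdot\sigma^\top=\nabla\cdot\mathbf{D}-\tfrac12\nabla\cdot\mathbf{D}=\tfrac12\nabla\cdot\mathbf{D}$. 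In the notation of Theorem~\ref{proptrans}, the correction term is $h=\nabla\cdot\mathbf{D}-\sigma\,\nabla\cdot\sigma^\top$, and the same computation shows $h=\tfrac12\nabla\cdot\mathbf{D}$ under the extra assumption. Hence the drift $b+h$ appearing in~\eqref{transformation_SDE} becomes exactly $b+\tfrac12\nabla\cdot\mathbf{D}$, which is the drift of~\eqref{transformation_SDE_simple}.

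Next I would check that the hypotheses of Theorem~\ref{proptrans} are indeed in force so that its conclusion transfers verbatim. The functions $b$ and $\sigma$ are assumed to be as in Proposition~\ref{sigma}, and Theorem~\ref{proptrans} additionally requires that $h=\nabla\cdot\mathbf{D}-\sigma\,\nabla\cdot\sigma^\top$ satisfy the Lipschitz and linear growth conditions in $x$; under the present additional assumption this is the same as requiring $\tfrac12\nabla\cdot\mathbf{D}$ to satisfy those conditions, which is part of the standing hypotheses inherited from Theorem~\ref{proptrans}. With these in place, Theorem~\ref{proptrans} asserts existence and uniqueness of a strong solution to the Itô SDE~\eqref{transformation_SDE} and that this solution solves the HK SDE~\eqref{HK-SDE} almost surely, together with the converse uniqueness statement. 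Substituting $h=\tfrac12\nabla\cdot\mathbf{D}$ into~\eqref{transformation_SDE} turns it into~\eqref{transformation_SDE_simple}, so the conclusion of Theorem~\ref{proptrans} becomes precisely the assertion of the corollary.

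There is essentially no genuine obstacle here; the only point that needs a line of care is the passage from~\eqref{convert_3} to the identification $h=\tfrac12\nabla\cdot\mathbf{D}$, i.e.\ making sure the algebraic manipulation of the tensor contractions $\nabla\sigma:\sigma^\top$ and $\sigma\,\nabla\cdot\sigma^\top$ is carried out componentwise in agreement with the conventions fixed in the notation section. Once that bookkeeping is done, the statement follows by direct substitution, and I would present the proof as a two-line deduction: invoke~\eqref{convert_3}, use the hypothesis to rewrite $h$, and quote Theorem~\ref{proptrans}.

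\begin{proof}
By the unconditional identity~\eqref{convert_3}, $\nabla\cdot\mathbf{D}=\nabla\sigma:\sigma^\top+\sigma\,\nabla\cdot\sigma^\top$, so the correction term of Theorem~\ref{proptrans} can be rewritten as $h=\nabla\cdot\mathbf{D}-\sigma\,\nabla\cdot\sigma^\top=\nabla\sigma:\sigma^\top$. Under the additional assumption $\sigma\,\nabla\cdot\sigma^\top=\tfrac12\nabla\cdot\mathbf{D}$ this yields $h=\nabla\cdot\mathbf{D}-\tfrac12\nabla\cdot\mathbf{D}=\tfrac12\nabla\cdot\mathbf{D}$, and in particular $h$ inherits the Lipschitz and linear growth conditions in $x$ from $\tfrac12\nabla\cdot\mathbf{D}$, so the hypotheses of Theorem~\ref{proptrans} are met. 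Substituting $h=\tfrac12\nabla\cdot\mathbf{D}$ in~\eqref{transformation_SDE} gives exactly~\eqref{transformation_SDE_simple}, and the conclusion follows from Theorem~\ref{proptrans}.
\end{proof}
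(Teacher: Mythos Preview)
Your proposal is correct and matches the paper's treatment: the corollary is stated there without proof, as an immediate consequence of Theorem~\ref{proptrans}, and your argument---substitute the additional hypothesis into $h=\nabla\cdot\mathbf{D}-\sigma\,\nabla\cdot\sigma^\top$ to obtain $h=\tfrac12\nabla\cdot\mathbf{D}$, then quote Theorem~\ref{proptrans}---is exactly the intended deduction. The only superfluous step is your remark that $h$ ``inherits'' the Lipschitz and linear growth conditions: since the corollary explicitly assumes all hypotheses of Theorem~\ref{proptrans}, the condition on $h$ is already part of the standing assumptions and need not be rederived.
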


\section{Convection-diffusion equations and HK calculus}\label{sec:cdehkc}

Now we state one of our main results, which connects HK SDEs with convection-diffusion PDEs. However, that connection is not general and relies on a very particular structure of the diffusion coefficient, as the following statement shows.

\begin{theorem}\label{teo-HK-FPE}
    Let $X_t$ be a $d$-dimensional diffusion process described by the autonomous HK SDE
\begin{equation}
   \mathrm{d}X_t= b(X_t)\,\mathrm{d}t+\sigma(X_t)\bullet\mathrm{d}W_t, \qquad X_0=Z,
\end{equation}
where the drift $b$, the diffusion coefficient $\mathbf{D}=\sigma{\sigma}^{\top}$, and the initial condition $Z$ obey the hypotheses \((\mathrm{H1})-(\mathrm{H3})\). Furthermore, assume that $\sigma$ is the principal square root of $\mathbf{D}$ and the identity $\nabla\cdot\mathbf{D}=2\,\sigma\,\nabla\cdot\sigma^\top$ holds. Then, the equation 
   \begin{equation}\label{finaly-equation}
   \partial_t u= - \nabla \cdot\big(b({x})\,u\big)+\frac{1}{2}\nabla \cdot\big(\mathbf{D}({x})\nabla u\big),   
   \end{equation}
with initial condition $u(x, 0)=u_0(x) \in L^1(\mathbb{R}^d)$ such that $Z \sim u_0(\cdot)$, possesses a unique mild solution \(u(x, t)\), which is in turn classical and corresponds to the probability density function of \(X_t\).
\end{theorem}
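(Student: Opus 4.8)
The plan is to reduce the statement to the Itô picture already developed in Section~\ref{sec:cdequations}. Concretely, the structural identity $\nabla\cdot\mathbf{D}=2\,\sigma\,\nabla\cdot\sigma^\top$ is precisely the extra hypothesis in Corollary~\ref{Corollary-main}, so the conversion rule will identify the given HK SDE with the canonical Fokker--Planck SDE~\eqref{SDE-ITO}; once that identification is in place, Theorem~\ref{theorem-main} delivers the classical/$L^1$ solution, its identification with the PDF of $X_t$, and (through Lemma~\ref{semigroup} and Lemma~\ref{lem:sfpf}) the mild-solution statement. So the only genuine work is to check that the conversion rule is applicable.

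First I would verify the hypotheses of Theorem~\ref{proptrans} and Corollary~\ref{Corollary-main}. By (H1) the tensor $\mathbf{D}$ is $C^5$ and by (H2) uniformly positive definite; since the matrix square root is a smooth map on the open cone of positive definite matrices, the principal square root $\sigma$ is $C^5$, so in particular $\sigma$ and all its first partials $\partial_i\sigma$ are continuous, as required in Proposition~\ref{sigma}. Lemma~\ref{regularity} together with Remark~\ref{regularity2} supplies a uniform bound on $\partial_k\sigma_{ij}$, whence $\sigma$ is globally Lipschitz in $x$; combined with the Lipschitz regularity of $b$ from (H1) (cf.\ Remark~\ref{globalcont}), both $b$ and $\sigma$ satisfy the Lipschitz and linear growth conditions in $x$. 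It remains to control $h:=\nabla\cdot\mathbf{D}-\sigma\,\nabla\cdot\sigma^\top$: the structural identity forces $h=\tfrac12\nabla\cdot\mathbf{D}$, whose components $\tfrac12\sum_j\partial_j\mathbf{D}_{ij}$ are bounded and Lipschitz thanks to the bounds on $\partial_k\mathbf{D}_{ij}$ and $\partial_k\partial_l\mathbf{D}_{ij}$ in (H1); in particular $h$ meets the Lipschitz and linear growth requirements.

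Next I would invoke Corollary~\ref{Corollary-main}, whose additional hypothesis $\sigma\,\nabla\cdot\sigma^\top=\tfrac12\nabla\cdot\mathbf{D}$ is exactly our structural identity. It yields that the autonomous HK SDE of the statement has an a.s.\ unique solution which coincides with the unique strong solution of the autonomous instance of~\eqref{transformation_SDE_simple}, namely
\[
  \mathrm{d}X_t=\Big[b(X_t)+\tfrac12\,\nabla\cdot\mathbf{D}(X_t)\Big]\mathrm{d}t+\sigma(X_t)\,\mathrm{d}W_t,\qquad X_0=Z,
\]
which is precisely~\eqref{SDE-ITO}. Now apply Theorem~\ref{theorem-main} to~\eqref{SDE-ITO}: under (H1)--(H3) it provides a unique solution $u\in C^1((0,T];C(\mathbb{R}^d))\cap C((0,T];C^2(\mathbb{R}^d))\cap C([0,T];L^1(\mathbb{R}^d))$ of~\eqref{opeEquation} with $\mathcal{L}$ as in~\eqref{CDO} --- which is exactly equation~\eqref{finaly-equation} --- and this $u(\cdot,t)$ is the PDF of the Itô solution of~\eqref{SDE-ITO}, hence of $X_t$. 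Finally, by Lemma~\ref{semigroup} the closure of $\mathcal{L}_0$ generates a strongly continuous contraction semigroup $\mathcal{T}(t)$ on $L^1(\mathbb{R}^d)$, so $u(\cdot,t)=\mathcal{T}(t)u_0$ is the unique mild solution of~\eqref{CDE0}; by Lemma~\ref{lem:sfpf} this mild solution, being classical, solves~\eqref{finaly-equation}, and its uniqueness as a mild solution is the uniqueness of the semigroup orbit. Collecting these facts gives that~\eqref{finaly-equation} admits a unique mild solution, which is classical and equals the PDF of $X_t$.

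The main obstacle is the regularity transfer in the second step: one must pass from the $C^5$-regularity and the uniform derivative bounds on $\mathbf{D}$ to the corresponding properties of its principal square root $\sigma$, so that the conversion rule and Theorem~\ref{theorem-main} are applicable; the quantitative half of this is exactly Lemma~\ref{regularity}, and the qualitative half is the smoothness of the matrix square root on positive definite matrices. Everything downstream is a bookkeeping composition of Corollary~\ref{Corollary-main} and Theorem~\ref{theorem-main}. It is worth stressing that the structural hypothesis is doing essential work: it is what collapses the drift correction $h$ to $\tfrac12\nabla\cdot\mathbf{D}$ and thereby turns the transformed Itô equation into the canonical Fokker--Planck SDE~\eqref{SDE-ITO}; without it, the extra term $\nabla\sigma:\sigma^\top-\sigma\,\nabla\cdot\sigma^\top$ (cf.~\eqref{convert_3}) survives and the correspondence with~\eqref{finaly-equation} is lost.
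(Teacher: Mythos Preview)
Your proposal is correct and follows essentially the same route as the paper: use Lemma~\ref{regularity} to transfer the regularity of $\mathbf{D}$ to its principal square root $\sigma$ so that Theorem~\ref{proptrans} applies, invoke Corollary~\ref{Corollary-main} via the structural identity to identify the HK SDE with the It\^o SDE~\eqref{SDE-ITO}, and then conclude by Theorem~\ref{theorem-main}. Your write-up is more explicit than the paper's (in particular you spell out why $h=\tfrac12\nabla\cdot\mathbf{D}$ is Lipschitz and why the mild-solution claim follows via Lemma~\ref{semigroup} and Lemma~\ref{lem:sfpf}), but the argument is the same.
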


\begin{proof}
Hypotheses \((\mathrm{H1})-(\mathrm{H3})\) imply, by Lemma~\ref{regularity}, the regularity assumptions in Theorem~\ref{proptrans}. This, together with the structural condition $\nabla\cdot\mathbf{D}=2\,\sigma\,\nabla\cdot\sigma^\top$, implies the result of Corollary~\ref{Corollary-main}.
The statement then follows as an immediate consequence of this Corollary along with Theorem~\ref{theorem-main}.
\end{proof}

Of course, the equality $\nabla\cdot\mathbf{D}=2\,\sigma\,\nabla\cdot\sigma^\top$ holds trivially in one dimension but does not hold generically in any higher dimension due to the non-commutative character of the matrix algebra. Therefore, in the following, we illustrate this structural condition with a series of examples in which it either does or does not hold. We anticipate that such a condition is rather restrictive and then fails in general. We can only expect it to hold for highly structured matrices.

\subsection{Positive examples}\label{positive-cases}
In this subsection we list a number of cases in which the matrix $\sigma$ does satisfy the identity 
\begin{equation}\label{identity-key}  \nabla\cdot\mathbf{D}=2\,\sigma\,\nabla\cdot\sigma^\top,  
\end{equation}
which is the structural condition assumed in Theorem~\ref{teo-HK-FPE}.

\subsubsection*{\textbf{Case~1: Constant diffusion tensor}}
Assume \(\mathbf{D}:\mathbb{R}^d\to\mathcal{M}_{d\times d}(\mathbb{R})\)  is constant and positive definite, \(\mathbf{D}(x)=D_0\) for all \(x\in\mathbb{R}^d\). Let \(\sigma(x)=D_0^{1/2}\) denote its principal square root, so that \(\mathbf{D}=\sigma\sigma^{\top}\). Then, for all indices \(i,j,k\) and all \(x\in\mathbb{R}^d\), we obtain 
\[
\partial_k \mathbf{D}_{ij}(x)=0,\qquad \partial_k \sigma_{ij}(x)=0,
\]
which meets the boundedness requirements in~(H1). Furthermore, hypothesis~(H2) is satisfied since $\mathbf{D}$ is positive definite. Hence, 
\[
(\nabla\!\cdot \mathbf{D})_i(x)=\sum_{j=1}^d \partial_j \mathbf{D}_{ij}(x)=0,\qquad
(\nabla\!\cdot \sigma^{\top})_i(x)=\sum_{j=1}^d \partial_j \sigma_{ji}(x)=0,
\]
and therefore, for all \(x\in\mathbb{R}^d\), the identity~\eqref{identity-key} is verified.

\subsubsection*{\textbf{Case~2: Isotropic diffusion tensor}}
Let \(\mathbf{D}:\mathbb{R}^d\to\mathcal{M}_{d\times d}(\mathbb{R})\) be an isotropic tensor such that \(\mathbf{D}(x)=g(x)\,\mathbb{I}_d\), where \(g\in C_b^{5}(\mathbb{R}^d)\) and \(\inf_{x\in\mathbb{R}^d} g(x)\ge \alpha>0\).
Furthermore, let \(\sigma(x)=\sqrt{g(x)}\,\mathbb{I}_d\) be its principal square root. Since \(g\) belongs to \(C_b^{5}\), the smoothness and derivative bounds required in~(H1) hold. Since \(g\) is bounded away from zero, the uniform ellipticity~(H2) follows from \(\xi^{\top}\mathbf{D}(x)\,\xi=g(x)|\xi|^2\ge \alpha|\xi|^2\) for every \(\xi\in\mathbb{R}^d\). Given the diagonal structure of $\mathbf{D}(x)$, we obtain
\[
(\nabla\!\cdot \mathbf{D})_i(x)=\sum_{j=1}^d \partial_j \mathbf{D}_{ij}(x)=\partial_i g(x),
\qquad
(\nabla\!\cdot \sigma^{\top})_i(x)=\sum_{j=1}^d \partial_j \sigma_{ji}(x)=\partial_i \sqrt{g(x)}=\frac{\partial_i g(x)}{2\sqrt{g(x)}},
\]
which ensures that the structural condition~\eqref{identity-key} is fulfilled uniformly.

\subsubsection*{\textbf{Case~3: Diagonal anisotropic diffusion tensor}}
Let \(g_k\in C_b^{5}(\mathbb{R}^d)\) be such that \(\inf_{x\in\mathbb{R}^d} g_k(x)\ge \alpha_k>0\) for each \(k=1,\dots,d\). Define
\[
\mathbf{D}(x)=\mathrm{diag}\big(g_1(x),\dots,g_d(x)\big).
\]
Let \(\sigma\) denote its principal square root; then,
\[
\sigma(x)=\mathrm{diag}\big(\sqrt{g_1(x)},\dots,\sqrt{g_d(x)}\big).
\]
Analogously to Case~2, the assumptions on \(g_k\) imply that all required derivatives of \(\mathbf{D}\) up to order \(5\) are bounded, and uniform ellipticity holds with
\(\alpha:=\min_{1\le k\le d}\alpha_k>0\). Similarly to the previous case, we obtain
\[
(\nabla\!\cdot \mathbf{D})_i(x)=\sum_{j=1}^d \partial_j \mathbf{D}_{ij}(x)=\partial_i g_i(x),
\qquad
(\nabla\!\cdot \sigma^{\top})_i(x)=\sum_{j=1}^d \partial_j \sigma_{ji}(x)
=\partial_i \sqrt{g_i(x)}=\frac{\partial_i g_i(x)}{2\sqrt{g_i(x)}}.
\]
Consequently,
\[
\big(2\,\sigma\,(\nabla\!\cdot \sigma^{\top})\big)_i(x)
=2\,\sqrt{g_i(x)}\,\frac{\partial_i g_i(x)}{2\sqrt{g_i(x)}}
=\partial_i g_i(x)
=(\nabla\!\cdot D)_i(x),
\]
which ensures that the structural condition~\eqref{identity-key} is met.

\subsubsection*{\textbf{Case~4: Orthogonally diagonalizable diffusion tensor}}
Let $\mathbf{D}:\mathbb{R}^d\to \mathcal{M}_{d\times d}(\mathbb{R})$ be positive definite such that $\mathbf{D}\in C_b^{5}(\mathbb{R}^d)$. Suppose that there exist $R\in\mathbb{O}(d)$ constant and functions $g_k\in C_b^{5}(\mathbb{R}^d)$ where $\inf_{x\in\mathbb{R}^d} g_k(x)\ge \alpha_k>0$ such that
\[
\mathbf{D}(x)=R\,\mathrm{diag}\big(g_1(x),\dots,g_d(x)\big)\,R^{\top}.
\]
Let
\[
\sigma(x)=R\,\mathrm{diag}\big(\sqrt{g_1(x)},\dots,\sqrt{g_d(x)}\big)\,R^{\top}
\]
be its principal square root, so that $\mathbf{D}=\sigma\sigma^{\top}$.
Since $R$ is constant and $g_k\in C_b^{5}(\mathbb{R}^d)$ with $g_k\ge \alpha_k>0$, the smoothness requirements in~(H1) hold and the uniform ellipticity in~(H2) follows with $\alpha:=\min_{1\le k\le d}\alpha_k$. Let $r_k$ denote the $k$-th column of $R$; then we obtain
\[
\nabla\!\cdot \mathbf{D}=\sum_{k=1}^d r_k\,\big((\nabla g_k)^{\top}\, r_k\big),\qquad
\nabla\!\cdot \sigma^{\top}=\sum_{k=1}^d r_k\,\big((\nabla \sqrt{g_k})^{\top}\, r_k\big),
\]
and
\[
2\,\sigma\,\nabla\!\cdot \sigma^{\top}
=\sum_{k=1}^d 2\sqrt{g_k}\,r_k\,\big((\nabla \sqrt{g_k})^{\top}\, r_k\big)
=\sum_{k=1}^d r_k\,\big((\nabla g_k)^{\top}\, r_k\big)
=\nabla\!\cdot \mathbf{D},
\]
which verifies identity~\eqref{identity-key}. Note that this case reduces to Case~3 when $R=\mathbb{I}_d$ and that any matrix is orthogonally diagonalizable if and only if it is symmetric (by the spectral theorem).

\begin{remark}\label{constant-rotations} The structural condition might hold even if we considered a non-symmetric noise amplitude. This can be shown by applying a constant orthogonal rotation in Case~4: consider a constant matrix $Q\in\mathbb{O}(d)$ and define
\[
\sigma(x)=\mathbf{D}^{1/2}(x)\,Q,
\]
which is generally non-symmetric and satisfies $\mathbf{D}=\sigma\sigma^{\top}$. In this case, using the symmetry of $\mathbf{D}^{1/2}$ and the constancy of $Q$, we obtain 
\[
\,\sigma\,(\nabla\!\cdot \sigma^{\top})=\mathbf{D}^{1/2}\,Q\,\nabla\!\cdot\!\big(Q^{\top}\mathbf{D}^{1/2}\big)=\mathbf{D}^{1/2}\,\nabla\!\cdot\!\mathbf{D}^{1/2},
\]
which, together with the result obtained in Case~4, verifies identity~\eqref{identity-key}. 
\end{remark}

\begin{remark} It is even possible to go beyond constant rotations upon the inclusion of suitable assumptions on the rotation matrix. So, in contrast to what was presented in Remark~\ref{constant-rotations}, if we set
\[
\sigma(x)=\mathbf{D}^{1/2}(x)\,Q(x), \qquad Q:\mathbb{R}^d\to\mathbb{O}(d),\ \ Q\in C_b^{5}(\mathbb{R}^d);
\]
then, the identity~\eqref{identity-key} holds if and only if
\[
\mathbf{D}^{1/2}\,Q\,(\nabla\!\cdot Q^{\top})\,\mathbf{D}^{1/2}=0
\quad\text{for all } x\in\mathbb{R}^d.
\]
Two immediate sufficient conditions are: $Q$ constant, which reproduces the case presented in Remark~\ref{constant-rotations}, or $\nabla\!\cdot Q^{\top}=0$ on $\mathbb{R}^d$, which means that the rows of $Q$ are divergence-free, that is, the rows of $Q$ are solenoidal vector fields. Outside these classes, the equality above typically fails, and identity~\eqref{identity-key} does not hold.
\end{remark}

\subsubsection*{\textbf{Case~5: Diffusion in the presence of a privileged orientation}}
Consider now the case in which diffusion takes place in an anisotropic medium in which a single privileged direction exists, such as a fiber-aligned diffusion.
Let $v\in\mathbb{R}^d$ with $\|v\|=1$ be the privileged orientation and $f,g\in C_b^5(\mathbb{R}^d)$ with
$\inf_x f(x)\ge \alpha_1>0$ and $\inf_x g(x)\ge \alpha_2>0$. Let us consider
\[
\mathbf D(x)=f(x)\,vv^\top+g(x)\,(\mathbb{I}_d-vv^\top)
= g(x)\,\mathbb{I}_d+\big(f(x)-g(x)\big)vv^\top .
\]
Since $v$ is constant and $f,g\in C_b^5$, all spatial derivatives of $\mathbf D$ up to order five are continuous and bounded, hence~(H1) holds. For uniform ellipticity, we use the existence and uniqueness of the orthogonal decomposition relative to $\mathrm{span}\{v\}$: for every $\xi\in\mathbb{R}^d$ there exist unique vectors $\xi_1,\xi_2$ with
\[
\xi=\xi_1+\xi_2,\qquad \xi_1=(v^\top\xi)\,v,\qquad \xi_2\perp v,\qquad |\xi|^2=|\xi_1|^2+|\xi_2|^2.
\]
Since $vv^\top$ and $I_d-vv^\top$ are complementary orthogonal projectors (as $\|v\|=1$), we obtain 
\[
\xi^\top \mathbf D(x)\,\xi
= f(x)\,|\xi_1|^2 + g(x)\,|\xi_2|^2
\;\ge\; \alpha_1\,|\xi_1|^2 + \alpha_2\,|\xi_2|^2
\;\ge\; \alpha\,|\xi|^2,
\qquad \alpha:=\min\{\alpha_1,\alpha_2\}>0,
\]
so $\mathbf D(x)$ is positive definite for every $x$ and~(H2) holds with ellipticity constant $\alpha$. Subsequently, we diagonalize $\mathbf D$. Since
\[
\mathbf D(x)=g(x)\,\mathbb{I}_d+\big(f(x)-g(x)\big)vv^\top,
\]
we have $\mathbf D(x)\,v=f(x)\,v$, and for any $w\perp v$, $\mathbf D(x)\,w=g(x)\,w$. Thus, the spectrum is
\[
\{\,f(x)\ \text{(with multiplicity\ 1)},\ g(x)\ \text{(with multiplicity\ }d-1)\,\}.
\]
Choose an orthonormal basis $\{v,e_2,\ldots,e_d\}$ with $e_j\perp v$, $j=2,\cdots,d$, and let $R\in \mathbb{O}(d)$ be the constant orthogonal matrix whose columns are $v,e_2,\ldots,e_d$. Then
\[
\mathbf D(x)=R\,\mathrm{diag}\big(f(x),\,g(x),\ldots,g(x)\big)\,R^\top .
\]
The principal square root is
\[
\sigma(x)=R\,\mathrm{diag}\big(\sqrt{f(x)},\,\sqrt{g(x)},\ldots,\sqrt{g(x)}\big)\,R^\top,
\qquad \mathbf{D}=\sigma\sigma^{\top}.
\]
Proceeding analogously to Case~4, since $R$ is constant and $\Lambda(x):=\mathrm{diag}(\sqrt{f(x)},\sqrt{g(x)},\ldots,\sqrt{g(x)})$ is diagonal, we have componentwise $\nabla\!\cdot \Lambda^2=2\,\Lambda\,(\nabla\!\cdot \Lambda^\top)$. Therefore,
\[
\nabla\!\cdot \mathbf D=2\,\sigma\,\nabla\!\cdot \sigma^\top,
\]
and the structural identity~\eqref{identity-key} holds pointwise.

\subsubsection*{\textbf{Case~6: Scalar-modulated constant anisotropic diffusion}}
Let $B\in\mathcal{M}_{d\times d}(\mathbb{R})$ be constant, symmetric, and positive definite.
Let $g\in C_b^{5}(\mathbb{R}^d)$ with $g(x)\ge \alpha>0$ for every $x\in\mathbb{R}^d$. Consider
\[
\mathbf{D}(x)=g(x)\,B.
\]
Since $B$ is constant and $g$ belongs to $C_b^{5}$, all spatial derivatives of $\mathbf{D}$ up to order five are continuous and bounded, which meets the requirement in~(H1).
Since $B$ is positive definite, there exists a constant $c_B>0$  such that
\[
\xi^{\top}B\,\xi \ge c_B \,|\xi|^2 \qquad \text{for all}\quad \xi\in\mathbb{R}^d.
\]
Additionally, since $g(x)\ge \alpha$, we obtain
\[
\xi^{\top}\mathbf{D}(x)\,\xi \ge \alpha\,c_B\,|\xi|^2 \qquad \text{for every } x \in\mathbb{R}^d.
\]
Then $\mathbf{D}(x)$ is positive definite for all $x$ and the uniform ellipticity assumption~(H2) holds with constant $\alpha\,c_B$. Take the principal square root $\sigma(x)=\sqrt{g(x)}\,B^{1/2}$, so that $\mathbf{D}=\sigma\sigma^{\top}$.
Since $B$ is constant,
\[
\nabla\!\cdot \mathbf{D}(x)=B\,\nabla g(x),
\qquad
(\nabla\!\cdot \sigma^{\top})(x)=B^{1/2}\,\nabla\sqrt{g(x)}.
\]
Hence,
\[
2\,\sigma(x)\,(\nabla\!\cdot \sigma^{\top})(x)
=2\,\sqrt{g(x)}\,B^{1/2}\,B^{1/2}\,\nabla\sqrt{g(x)}
= B\,\nabla g(x)
=\nabla\!\cdot \mathbf{D}(x),
\]
and the structural identity~\eqref{identity-key} holds pointwise.
One might also use a non-symmetric matrix $\tilde\sigma(x):=\sigma(x)\,Q$ with $Q\in\mathbb{O}(d)$ constant, and the identity remains valid.

\begin{remark}
As a specific example, we consider radial-modulated diffusion. Radial modulation is ubiquitous in rotationally symmetric models, for instance around point sources, and its assessment follows immediately from Case~6. To see this, let $B$ be constant, symmetric, and positive definite as in that case. Let $g(x)=h(r)$ and $r=|x|$, where
$h\in C_b^5([0,\infty))$, $h(r)\ge \alpha>0$, and
\[
h'(0)=h^{(3)}(0)=h^{(5)}(0)=0.
\]
On $\mathbb{R}^d\setminus\{0\}$,
\[
\nabla g(x)=\frac{h'(r)}{r}\,x,
\qquad
\nabla\!\cdot D(x)=B\,\nabla g(x)=\frac{h'(r)}{r}\,B\,x.
\]
The null-odd-derivative conditions ensure that $g$ is $C^5$ at the origin, in particular, $\nabla g(0)=0$, and that all derivatives of $g$ up to order five extend continuously to $\mathbb{R}^d$. For $\sigma(x)=\sqrt{g(x)}\,B^{1/2}$, the chain rule implies that the structural identity \eqref{identity-key} holds for $x\neq 0$, and by continuity it also holds at $x=0$. This case would arise whenever diffusivity depended only on the distance to the origin, for example, in a material whose properties depended on temperature and in the presence of a localized heat source. Note also that this remark covers anisotropic situations, given the generality of $B$.
\end{remark}

\subsection{Negative examples}\label{negative-cases}

In the following, we summarize a couple of cases in which the identity $\nabla\cdot\mathbf{D}=2\,\sigma\,\nabla\cdot\sigma^\top$ does not hold. As already mentioned, this is the generic situation, even in dimension two.

\subsubsection*{\textbf{Case~1: Bidimensional cross-coupled diffusion tensor}}  
Let $\alpha, \,\beta>0$ be constants and let $\tau\in C_b^{5}(\mathbb{R}^2)$ satisfy $\sup_x|\tau(x)|\le \tau_{\max}<\sqrt{\alpha\beta}$. Consider
\[
\mathbf{D}(x)=
\begin{pmatrix}
\alpha^{2}+\tau(x)^{2} & (\alpha+\beta)\,\tau(x)\\[2pt]
(\alpha+\beta)\,\tau(x) & \beta^{2}+\tau(x)^{2}
\end{pmatrix},
\]
for $x\in\mathbb{R}^2$. Since $\tau\in C_b^{5}$, all spatial derivatives of $\mathbf{D}$ up to order five are continuous and bounded, hence~(H1) holds. Also, let
\[
\sigma(x)=
\begin{pmatrix}
\alpha & \tau(x)\\[2pt]
\tau(x) & \beta
\end{pmatrix}
\]
denote its principal square root, $\mathbf D=\sigma\sigma^{\top}$. By the Rayleigh-Ritz theorem, for every $\xi\in\mathbb{R}^2$ we have
\[
\xi^{\top}\mathbf D(x)\xi=|\sigma \xi|^{2}\ \ge\ \lambda_{\min}(\sigma)^{2}\,|\xi|^{2}, 
\]
where $\lambda_{\min}(\sigma)$ denotes the smallest eigenvalue of $\sigma$. Since $\sigma$ is a symmetric $2\times2$ matrix, it follows that $\lambda_{\min}(\sigma)\ge \det\sigma/\operatorname{tr}\sigma$.
Here $\det\sigma=\alpha\beta-\tau(x)^{2}\ge \alpha\beta-\tau_{\max}^{2}=:\delta_1>0$ and
$\operatorname{tr}\sigma=\alpha+\beta$; hence
\begin{equation*}
  \xi^{\top}\mathbf D(x)\xi\ \ge\ \Big(\frac{\delta_1}{\alpha+\beta}\Big)^{2} \;|\xi|^{2}.
\end{equation*}
 Therefore, $\mathbf{D}$ is positive definite for every $x$ and the uniform ellipticity hypothesis~(H2) holds with constant $[\delta_1/(\alpha+\beta)]^{2}$. Subsequently, since in $\mathbf{D}$ only $\tau$ depends on $x$, we obtain
\[
\nabla\!\cdot \mathbf{D}=
\begin{pmatrix}
2\,\tau\,\partial_{1}\tau +(\alpha+\beta)\,\partial_{2}\tau\\[4pt]
(\alpha+\beta)\,\partial_{1}\tau +2\,\tau\,\partial_{2}\tau
\end{pmatrix},
\qquad
\nabla\!\cdot \sigma^{\top}=
\begin{pmatrix}
\partial_{2}\tau\\[4pt]
\partial_{1}\tau
\end{pmatrix}.
\]
Hence,
\[
\Lambda(x):=\nabla\!\cdot \mathbf{D}-2\,\sigma\,(\nabla\!\cdot \sigma^{\top})
=
\begin{pmatrix}
(\beta-\alpha)\,\partial_{2}\tau\\[4pt]
(\alpha-\beta)\,\partial_{1}\tau
\end{pmatrix}.
\]
Thus, if $\alpha\neq\beta$ and $\nabla\tau$ is not identically zero, there exists $x^\ast$ with $\Lambda(x^\ast)\neq 0$; therefore, the structural identity~\eqref{identity-key} is not fulfilled, despite $\mathbf{D}$ satisfies~(H1)--(H2) and $\sigma=\mathbf{D}^{1/2}$ has been chosen as its principal square root.

\begin{remark}
The function $\tau(x)$ controls the off–diagonal part of $\mathbf{D}$ and $\sigma$. Therefore, intuitively, it measures how much the diffusive fluxes correlate between the $x_1$ and $x_2$ directions. If $\tau$ is coordinate dependent, that means the correlation also varies in space. Some admissible forms of $\tau$ that can be expressed in terms of elementary functions are: 
\begin{itemize}
  \item Periodic: $\tau(x)=\varepsilon\,\sin(k^{\top} x)$ with $k\in\mathbb{R}^{2}\setminus\{0\}$, $\varepsilon\in\mathbb{R}$;
  \item Gaussian: $\tau(x)=\varepsilon\,\exp\!\big(-|x-x_{0}|^{2}/\ell^{2}\big)$ with $x_{0}\in\mathbb{R}^{2}$, $\ell>0$, $\varepsilon\in\mathbb{R}$;
  \item Front-like: $\tau(x)=\varepsilon\,\tanh\!\big((x_{1}-c)/\ell\big)$ with $c\in\mathbb{R}$, $\ell>0$, $\varepsilon\in\mathbb{R}$,
  and  $x=(x_1, x_2)$.
\end{itemize}
All these examples give rise to regions where $\nabla\tau(x)$ does not cancel out. There, the function $\Lambda(x)$ does not vanish and the identity~\eqref{identity-key} fails as shown in Case~1.
\end{remark}

\subsubsection*{\textbf{Case~2: Separable cross-coupled diffusion  tensor}} 
Let $\varepsilon\neq 0$ be a constant, and let $a,b\in C_b^5(\mathbb{R})$.
Assume
\[
a(x_1)\ge a_0>0,\qquad b(x_2)\ge b_0>0,\qquad
a(x_1)b(x_2)-\varepsilon^2\ge \delta_2>0\qquad \text{for all }x=(x_1,x_2)\in\mathbb{R}^2.
\]
Let us consider
\[
\mathbf D(x)=
\begin{pmatrix}
a(x_1)^2+\varepsilon^2 & \varepsilon\big(a(x_1)+b(x_2)\big)\\[2pt]
\varepsilon\big(a(x_1)+b(x_2)\big) & b(x_2)^2+\varepsilon^2
\end{pmatrix}.
\]
Since $a,b\in C_b^5$, all spatial derivatives of $\mathbf D$ up to order five are continuous and bounded,
hence~(H1) holds. Moreover,
\[
\det \mathbf D(x)=\big(a(x_1)b(x_2)-\varepsilon^2\big)^2\ge \delta^2_2>0,
\qquad
\mathbf D_{11}(x)=a(x_1)^2+\varepsilon^2>0 .
\]
Therefore, by the Sylvester criterion, $\mathbf D(x)$ is positive definite for every $x$, and~(H2) holds. Also, let us denote by
\[
\sigma(x)=
\begin{pmatrix}
a(x_1) & \varepsilon\\[2pt]
\varepsilon & b(x_2)
\end{pmatrix}
\]
its principal square root; then $\mathbf D=\sigma\sigma^{\top}$ and $\det\sigma(x)=a(x_1)b(x_2)-\varepsilon^2\ge\delta_2>0$.  Subsequently, by separability (that is, in $\mathbf D$ only $a$ depends on $x_1$ and only $b$ on $x_2$) we obtain
\[
\nabla\!\cdot \mathbf D=
\begin{pmatrix}
2a\,\partial_{1}a+\varepsilon\,\partial_{2}b\\[4pt]
\varepsilon\,\partial_{1}a+2b\,\partial_{2}b
\end{pmatrix},
\qquad
\nabla\!\cdot \sigma^{\top}=
\begin{pmatrix}\partial_{1}a\\[2pt]\partial_{2}b\end{pmatrix}.
\]
Hence,
\[
\Lambda(x):=\nabla\!\cdot \mathbf D-2\,\sigma(\nabla\!\cdot \sigma^{\top})
=
-\,\varepsilon
\begin{pmatrix}
\partial_{2}b(x_2)\\[2pt]
\partial_{1}a(x_1)
\end{pmatrix}.
\]
Thus, if $\varepsilon\neq 0$ and either $\partial_{1}a$ or $\partial_{2}b$ is not identically zero, there exists $x^\ast$ with $\Lambda(x^\ast)\neq 0$; therefore, the structural identity~\eqref{identity-key} is not fulfilled, despite $\mathbf D$ again satisfies~(H1)–(H2) and $\sigma=\mathbf D^{1/2}$ is its principal square root.

\begin{remark}
The functions $a(x_1)$ and $b(x_2)$ regulate this cross-coupled diffusion in the directions $x_1$ and $x_2$ respectively. Assume $\varepsilon\neq0$; then the structural identity~\eqref{identity-key} fails precisely in those regions in which either $\partial_{1}a(x_1)$ or $\partial_{2}b(x_2)$ does not vanish. Some admissible choices for $a$ and $b$ that can be written in terms of elementary functions are:
\begin{itemize}
  \item Periodic: $a(x_1)=a_0+\eta_a\sin(k_a x_1+\theta_a)$, \;
  $b(x_2)=b_0+\eta_b\cos(k_b x_2+\theta_b)$, \;
  with $k_a,k_b>0$, $\theta_a,\theta_b\in\mathbb{R}$, and amplitudes chosen so that
  $|\eta_a|<a_0$, $|\eta_b|<b_0$, and $(a_0-|\eta_a|)(b_0-|\eta_b|)-\varepsilon^2\ge \delta_2$.
  \item Gaussian: $a(x_1)=a_0+\eta_a\,e^{-(x_1-x_1^0)^2/\ell_1^2}$, \;
  $b(x_2)=b_0+\eta_b\,e^{-(x_2-x_2^0)^2/\ell_2^2}$, \;
  with $x_1^0,x_2^0\in\mathbb{R}$, $\ell_1,\ell_2>0$, and amplitudes satisfying
  $|\eta_a|<a_0$, $|\eta_b|<b_0$, and $(a_0-|\eta_a|)(b_0-|\eta_b|)-\varepsilon^2\ge \delta_2$.
  \item Front-like: $a(x_1)=a_0+\eta_a\tanh\!\big((x_1-c_1)/\ell_1\big)$, \;
  $b(x_2)=b_0+\eta_b\tanh\!\big((x_2-c_2)/\ell_2\big)$, \;
  with $c_1,c_2\in\mathbb{R}$, $\ell_1,\ell_2>0$, and amplitudes chosen so that
  $|\eta_a|<a_0$, $|\eta_b|<b_0$, and $(a_0-|\eta_a|)(b_0-|\eta_b|)-\varepsilon^2\ge \delta_2$.
\end{itemize}
\end{remark}

\section{Beyond the kinetic interpretation of noise}
\label{sec:beyond}

Our developments so far have left a number of side questions open. The goal of this section is to address them. We start with an attempt of definition of stochastic integral which was aimed to constitute a direct discretization of the kinetic interpretation of noise.

\subsection{The H\"utter-\"Ottinger discretization} \label{subsecho}

In reference~\cite{hutter1998}, the authors propose a new discretization of the stochastic integral that would presumably give rise to the desired calculus. Instead of focusing on the general definition, we will examine a particular case that can be considered paradigmatic:
$$
\int_0^T W_t \; \raisebox{0.5ex}{\scalebox{1.2}{\text{?`}}}\hspace{-0.025cm}W_t := L^2(\Omega)-\lim_{||\Delta_n||\to 0}\sum_{j=1}^{n}
\frac12 \left( \frac{W_{t_j}^2}{W_{t_{j-1}}} +W_{t_{j-1}}\right) \left( W_{t_j}-W_{t_{j-1}}\right),
$$
where $\Delta_n=\{t_0, t_1, \ldots, t_{n-1}, t_n\}$ is a partition of $[0, T]$, $||\Delta_n||=\max_{1\leq k \leq n}(t_k-t_{k-1})$, and ``$\raisebox{0.5ex}{\scalebox{1.2}{\text{?`}}}\hspace{-0.025cm}W_t$'' is our notation for the new attempt of stochastic integration as described in~\cite{hutter1998}. This discretization presents the obvious drawback of being nonlinear for a linear integrand; but, moreover, it is almost surely ill-posed due to the fact that $W_{t_j} - W_{t_{j-1}} \sim \mathcal{N}(0,t_j-t_{j-1})$ along with $W_0=0$ with probability one (thus causing a divergence). In addition, in the interval $[0,T]$, for any $T>0$, the Hausdorff dimension of the set of all zeros of Brownian motion is $1/2$ almost surely, as follows from Theorem~4.24 in~\cite{mp2010bm} and the scale invariance of Brownian motion. In particular, the Brownian motion does not only vanish at the initial time $t=0$, but also in an uncountable set of times within each finite interval starting from it. Just for comparison, consider the Riemann-Stieltjes-type discretization:
\begin{equation}\nonumber
\int_0^T W_t \stackrel{\lambda}{\odot} \mathrm{d} W_t := L^2(\Omega)-\lim_{||\Delta_n||\to 0}\sum_{j=1}^{n} W_{t_{j-1}^*} \left( W_{t_j}-W_{t_{j-1}}\right),
\end{equation}
where $t_{j-1}^* \in [t_{j-1},t_j]$, and therefore it can be expressed as $t_{j-1}^* = \lambda t_{j} + (1-\lambda) t_{j-1}$ for any $\lambda \in [0,1]$. It is well-known that this is a well-defined stochastic integral for any $\lambda$ within this interval and yields the result:
\begin{equation}\nonumber
\int_0^T W_t \stackrel{\lambda}{\odot} \mathrm{d} W_t = \frac{W_T^2}{2} + \left( \lambda -\frac12 \right) T,
\end{equation}
see Chapter~4 in~\cite{evans} for the derivation. So, in contrast with the first discretization, the second choice (which is, in fact, a one-parameter family of discretizations) preserves the linearity of the integrand and gives rise to a well-posed integral.

This attempt of stochastic integral was inspired by a numerical algorithm~\cite{fixman1978}. Although our result in this section is negative, that does not mean that an algorithm cannot be reinterpreted as a stochastic integration scheme. Showing this is the goal of the next subsection.

\subsection{The Fehlberg 255/512-interpretation}

In a sense, as mentioned in the previous subsection, one can think of noise interpretations as suitable limits of some numerical algorithms. In fact, the classical interpretations may be seen as certain limits of Runge-Kutta methods~\cite{hairer}. Precisely, the It\^o interpretation would correspond to the forward Euler method, the HK interpretation to the backward Euler method, and the Stratonovich interpretation to the midpoint method. The last method differs in general to the Heun or trapezoidal method, but they coincide in the limit provided enough regularity is granted~\cite{protter2005stochastic}; that can also be deduced, in a particular case, from the Riemann-Stieltjes-type discretization in the previous subsection. Moreover, most Runge-Kutta methods would reduce to one of these three integration schemes (with a clear preference towards the Stratonovich interpretation) in a suitable limit and if enough regularity is assumed. Interestingly, this is not the case for the first-order Runge-Kutta method introduced by Fehlberg in Section~IV of~\cite{fehlberg}. Assuming enough regularity, this scheme would correspond to the choice $\lambda=255/512$ in the Riemann-Stieltjes scheme introduced in Subsection~\ref{subsecho}; obviously, the interpretations of It\^o, Stratonovich, and HK correspond, respectively, to $\lambda=0,1/2,1$. Since $255/512=1/2-1/512$, this interpretation could be considered as a perturbation of that of Stratonovich. We can introduce it in more generality as follows (from now on we assume $0 \le a < b < \infty$).

\begin{definition}\label{fint}
For any $\Phi\in \mathcal{C}^1(\mathbb{R},\mathbb{R})$ the Fehlberg $255/512-$integral of $(\Phi(W_t))_{t\geq 0}$ in the interval $[a,b]$ with respect to the Brownian motion $\{W_t\}_{t\geq 0}$ is defined as
	\begin{equation}\label{HLI_cs}
	\int\limits_a^b \Phi(W_t)\star \mathrm{d}W_t :=\lim_{\|\Delta_n\|\to 0}\sum_{j=1}^{n}\Phi(W_{t^*_j})(W_{t_j}-W_{t_{j-1}})\quad \textrm{in probability},
	\end{equation}
where $\Delta_n=\{t_0, t_1, \ldots, t_{n-1}, t_n\}$ is a partition of $[a, b]$, $\|\Delta_n\|:=\max_{1\leq j \leq n}(t_j-t_{j-1})$ is its diameter, and $t_{j-1}^* = \lambda t_{j} + (1-\lambda) t_{j-1}$ with $\lambda=255/512$.
\end{definition}

As happens with the Stratonovich and HK integrals, Definition~\ref{fint} poses a limit that exists and is connected to the Itô integral.

\begin{theorem}\label{simtheor}
For any $\Phi \in \mathcal{C}^1(\mathbb{R},\mathbb{R})$ the limit~\eqref{HLI_cs} exists and we have that the equality
\begin{equation}\label{eq:ibf}
\int\limits_a^b \Phi(W_t)\star \,\mathrm{d}W_t=\int\limits_a^b \Phi(W_t)\,\mathrm{d}W_t+ \frac{255}{512}\int\limits_{a}^{b}\Phi^{\prime}(W_t)\,\mathrm{d}t \quad \textrm{holds almost surely}.
\end{equation}
\end{theorem}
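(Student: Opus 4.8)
The plan is to reduce this to the well-known convergence theory for the $\lambda$-point Riemann sum approximations of stochastic integrals against Brownian motion, which already appears in the previous subsection via the identity $\int_0^T W_t \stackrel{\lambda}{\odot} \mathrm{d}W_t = W_T^2/2 + (\lambda - 1/2)T$. The key is that $255/512$ is simply a specific admissible value of $\lambda \in [0,1]$, so the existence of the limit in~\eqref{HLI_cs} follows from the same argument that establishes the existence of the $\lambda$-integral for a general $C^1$ integrand, and the correction term in~\eqref{eq:ibf} is the standard $\lambda$-dependent It\^o correction $\lambda \int_a^b \Phi'(W_t)\,\mathrm{d}t$ specialized to $\lambda = 255/512$.

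Concretely, I would proceed as follows. First, fix a partition $\Delta_n$ and write $\Phi(W_{t_j^*}) = \Phi(W_{t_{j-1}}) + \Phi'(W_{t_{j-1}})(W_{t_j^*} - W_{t_{j-1}}) + R_j$, where $R_j$ is the first-order Taylor remainder, controlled by the modulus of continuity of $\Phi'$ on the (a.s. bounded) range of the Brownian path over $[a,b]$. Substituting into the Riemann sum splits it into three pieces: (i) $\sum_j \Phi(W_{t_{j-1}})(W_{t_j} - W_{t_{j-1}})$, which converges in probability (indeed in $L^2$ on the event that the path stays bounded, after a standard localization) to the It\^o integral $\int_a^b \Phi(W_t)\,\mathrm{d}W_t$; (ii) the cross term $\sum_j \Phi'(W_{t_{j-1}})(W_{t_j^*} - W_{t_{j-1}})(W_{t_j} - W_{t_{j-1}})$, whose conditional expectation given $\mathcal{F}_{t_{j-1}}$ is $\Phi'(W_{t_{j-1}})\,\mathrm{Cov}(W_{t_j^*} - W_{t_{j-1}},\, W_{t_j} - W_{t_{j-1}}) = \Phi'(W_{t_{j-1}})\,\lambda(t_j - t_{j-1})$ since $W_{t_j^*} - W_{t_{j-1}}$ and $W_{t_j} - W_{t_j^*}$ are independent and $t_j^* - t_{j-1} = \lambda(t_j - t_{j-1})$; a second-moment estimate shows the fluctuations of this sum vanish, so it converges in probability to $\lambda \int_a^b \Phi'(W_t)\,\mathrm{d}t = \frac{255}{512}\int_a^b \Phi'(W_t)\,\mathrm{d}t$; and (iii) the remainder sum $\sum_j R_j (W_{t_j} - W_{t_{j-1}})$, which vanishes in probability because $|R_j| = o(|W_{t_j^*} - W_{t_{j-1}}|)$ uniformly as $\|\Delta_n\| \to 0$ and $\sum_j |W_{t_j^*} - W_{t_{j-1}}|\,|W_{t_j} - W_{t_{j-1}}|$ is bounded in probability (being $O(1)$ by Cauchy--Schwarz and the finiteness of the quadratic variation).

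The main obstacle is the bookkeeping in step (ii): one must be careful that $t_j^* \in [t_{j-1}, t_j]$ depends on the partition and that $W_{t_j} - W_{t_{j-1}}$ is \emph{not} independent of $W_{t_j^*} - W_{t_{j-1}}$, so the standard It\^o-isometry-type computation must be done by splitting $W_{t_j} - W_{t_{j-1}} = (W_{t_j^*} - W_{t_{j-1}}) + (W_{t_j} - W_{t_j^*})$ and using independence of increments over disjoint intervals. This is exactly the computation that produces the factor $\lambda$ rather than $1/2$, and getting the variance bound on the martingale-difference part right (to conclude $L^2$, hence in-probability, convergence to zero of the fluctuations) is the technical heart of the argument. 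Everything else is routine, and since $\Phi$ is only assumed $C^1$ — not $C^2$ — the argument must rely on uniform continuity of $\Phi'$ on compacts together with a localization over the event $\{\sup_{a \le t \le b} |W_t| \le N\}$, letting $N \to \infty$ at the end; this is why the convergence is stated in probability rather than in $L^2$. Alternatively, one may simply invoke Theorem~\ref{Theorem_RCM} (or its one-dimensional specialization) with the evaluation parameter $\lambda$ in place of the midpoint/right-endpoint choice, since the proof there is insensitive to the particular value of $\lambda$; I would mention this as the quickest route and then sketch the direct argument above for completeness.
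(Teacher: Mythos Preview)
Your proposal is correct and follows essentially the same route as the paper: form the difference $D_{\Delta_n}$ between the $\lambda$-sum and the It\^o sum, extract the correction $\lambda\int_a^b\Phi'(W_t)\,\mathrm{d}t$ via the splitting $W_{t_j}-W_{t_{j-1}}=(W_{t_j^*}-W_{t_{j-1}})+(W_{t_j}-W_{t_j^*})$, and handle unbounded $\Phi'$ by localizing on $\{\max_{a\le t\le b}|W_t|\le l\}$. The only cosmetic difference is that the paper applies the mean value theorem (so $\Phi'$ sits at an intermediate point and there is no separate remainder term), whereas you expand at $W_{t_{j-1}}$ and carry an explicit Taylor remainder $R_j$; both bookkeepings work under the stated $C^1$ hypothesis.
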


\begin{proof}
Following~\cite{escudero2023ito}, the right-hand side of equation~\eqref{eq:ibf} is
almost surely well defined.
Because $\Phi\in \mathcal{C}^1(\mathbb{R},\mathbb{R})$, the It\^o integral is defined as the limit in probability
\begin{equation}\label{II}
\int\limits_a^b \Phi(W_t) \,\mathrm{d}W_t =\lim_{\|\Delta_n\|\to 0}\sum_{j=1}^{n}\Phi(W_{t_{j-1}})(W_{t_j}-W_{t_{j-1}})
\end{equation}
for any family of partitions $\Delta_n$ such that
$\|\Delta_n\|\to 0$ as $n\to \infty$. For one such family $\Delta_n$ take the difference between the limits on the right-hand sides of~\eqref{HLI_cs}
and~\eqref{II} to get
\begin{equation}\label{eq:diferencia}
\begin{split}
D_{\Delta_n} &= \sum_{j=1}^{n}\Big[\Phi(W_{t_j^*})-\Phi(W_{t_{j-1}})\Big](W_{t_j}-W_{t_{j-1}}) \\
&= \sum_{j=1}^{n}\Phi^{\prime}((1-\theta_j)W_{t_{j-1}}+\theta_j W_{t_j^*})(W_{t_j^*}-W_{t_{j-1}})^2 \\
& \quad + \sum_{j=1}^{n}\Phi^{\prime}((1-\theta_j)W_{t_{j-1}}+\theta_j W_{t_j^*})(W_{t_j}-W_{t_{j}^*})(W_{t_j^*}-W_{t_{j-1}}),
\end{split}
\end{equation}
where we have used the continuous differentiability of $\Phi$ together with the mean value theorem; $0 < \theta_j < 1$, $j=1,\ldots,n$ are fixed parameters. Now, by Lemmata~7.2.1 and~7.2.3 in \cite{kuo}, we find
\begin{equation}\label{eq:limitderi}
\lim_{\|\Delta_n\|\to 0}\sum_{j=1}^{n}\Phi^{\prime}\big((1-\theta_j)W_{t_{j-1}}+\theta_j W_{t_j}\big)(W_{t_j^*}-W_{t_{j-1}})^2= \lambda \int\limits_{a}^{b}\Phi^{\prime}(W_t)\,\mathrm{d}t
\end{equation}
in probability.

To analyze the last line of equation~\eqref{eq:diferencia}, note that if $\Phi'$ were a bounded function (additionally to continuous), then
\begin{equation}\label{eq:crossterm}
\begin{split}
& \qquad \mathbb{E} \left[ \left|\sum_{j=1}^{n}\Phi^{\prime}((1-\theta_j)W_{t_{j-1}}+\theta_j W_{t_j^*})(W_{t_j}-W_{t_{j}^*})(W_{t_j^*}-W_{t_{j-1}}) \right|\right] \\
&\le \mathbb{E} \left[\sum_{j=1}^{n} \left|\Phi^{\prime}((1-\theta_j)W_{t_{j-1}}+\theta_j W_{t_j^*})\right| \left|W_{t_j^*}-W_{t_{j-1}}\right| \mathbb{E} \left[\left|W_{t_j}-W_{t_{j}^*}\right| \Big| \mathcal{F}_{t_j^*} \right] \right] \\
&\le \sqrt{\frac{2 (1-\lambda) \|\Delta_n\|}{\pi}} \; \mathbb{E} \left[\sum_{j=1}^{n} \left|\Phi^{\prime}((1-\theta_j)W_{t_{j-1}}+\theta_j W_{t_j^*})\right| \left|W_{t_j^*}-W_{t_{j-1}}\right| \right] \\
&\le \sqrt{\frac{2 (1-\lambda) \|\Delta_n\|}{\pi}} \, \|\Phi'\|_\infty \; \mathbb{E} \left[\sum_{j=1}^{n} \left|W_{t_j^*}-W_{t_{j-1}}\right|^2 \right]^{1/2} \\
&= \sqrt{\frac{2 \lambda (1-\lambda) \|\Delta_n\|}{\pi}} \, \|\Phi'\|_\infty \, \left[\sum_{j=1}^{n} (t_j - t_{j-1}) \right]^{1/2} \\
&= \sqrt{\frac{2 \lambda (1-\lambda) \|\Delta_n\|}{\pi}} \, \|\Phi'\|_\infty \sqrt{b-a} \longrightarrow 0, \quad \text{as} \quad n \to \infty,
\end{split}
\end{equation}
by the triangle inequality, the tower property, the independence of the increments of Brownian motion, the value of the first absolute moment of Brownian motion, the Jensen inequality, and the variance of the Brownian increments. Thus, the last line of equation~\eqref{eq:diferencia} converges to zero in $L^1(\Omega)$, and hence in probability (provided $\|\Phi'\|_\infty$ is well defined).

However, $\Phi'$ is continuous but not globally bounded in general; nevertheless, its continuity implies it is bounded on any closed interval, so the maximum
$$
\max_{W_t \in [-l,l]}\Phi^{\prime}((1-\theta_j)W_{t_{j-1}}+\theta_j W_{t_j^*})
$$
is well defined, where the full trajectory of the Brownian motion, for $a \le t \le b$, is assumed to be constrained in $[-l,l]$, with $l>0$ large enough. In consequence, by the almost sure continuity of Brownian motion and~\eqref{eq:crossterm},
\begin{equation}\nonumber
\lim_{n \to \infty} \mathbb{P} \left[ \left. \left|\sum_{j=1}^{n}\Phi^{\prime}((1-\theta_j)W_{t_{j-1}}+\theta_j W_{t_j^*})(W_{t_j}-W_{t_{j}^*})(W_{t_j^*}-W_{t_{j-1}}) \right| > \epsilon \; \right| \max_{a \le t \le b} |W_t| \le l \right]=0,
\end{equation}
for all $\epsilon > 0$. Now, by the law of total probability,
\begin{equation}\label{eq:crossterm2}
\begin{split}
& \qquad \mathbb{P} \left[ \left|\sum_{j=1}^{n}\Phi^{\prime}((1-\theta_j)W_{t_{j-1}}+\theta_j W_{t_j^*})(W_{t_j}-W_{t_{j}^*})(W_{t_j^*}-W_{t_{j-1}}) \right| > \epsilon \right] \\
&= \mathbb{P} \left[ \left. \left|\sum_{j=1}^{n}\Phi^{\prime}((1-\theta_j)W_{t_{j-1}}+\theta_j W_{t_j^*})(W_{t_j}-W_{t_{j}^*})(W_{t_j^*}-W_{t_{j-1}}) \right| > \epsilon \; \right| \max_{a \le t \le b} |W_t| \le l \right] \\
& \quad \times \mathbb{P} \left[ \max_{a \le t \le b} |W_t| \le l \right] \\
& \quad + \mathbb{P} \left[ \left. \left|\sum_{j=1}^{n}\Phi^{\prime}((1-\theta_j)W_{t_{j-1}}+\theta_j W_{t_j^*})(W_{t_j}-W_{t_{j}^*})(W_{t_j^*}-W_{t_{j-1}}) \right| > \epsilon \; \right| \max_{a \le t \le b} |W_t| \not\le l \right] \\
& \quad \times \mathbb{P} \left[ \max_{a \le t \le b} |W_t| \not\le l \right] \\
&\le \mathbb{P} \left[ \left. \left|\sum_{j=1}^{n}\Phi^{\prime}((1-\theta_j)W_{t_{j-1}}+\theta_j W_{t_j^*})(W_{t_j}-W_{t_{j}^*})(W_{t_j^*}-W_{t_{j-1}}) \right| > \epsilon \; \right| \max_{a \le t \le b} |W_t| \le l \right] \\
& \quad + \mathbb{P} \left[ \max_{a \le t \le b} |W_t| \not\le l \right] \\
&\le \mathbb{P} \left[ \left. \left|\sum_{j=1}^{n}\Phi^{\prime}((1-\theta_j)W_{t_{j-1}}+\theta_j W_{t_j^*})(W_{t_j}-W_{t_{j}^*})(W_{t_j^*}-W_{t_{j-1}}) \right| > \epsilon \; \right| \max_{a \le t \le b} |W_t| \le l \right] \\
& \quad + \frac{1}{l} \; \mathbb{E} \left( |W_b| \right) \\
&= \mathbb{P} \left[ \left. \left|\sum_{j=1}^{n}\Phi^{\prime}((1-\theta_j)W_{t_{j-1}}+\theta_j W_{t_j^*})(W_{t_j}-W_{t_{j}^*})(W_{t_j^*}-W_{t_{j-1}}) \right| > \epsilon \; \right| \max_{a \le t \le b} |W_t| \le l \right] \\
& \quad + \frac{1}{l} \, \sqrt{\frac{2 b}{\pi}}
\longrightarrow 0, \quad \text{as} \quad l, n \to \infty,
\end{split}
\end{equation}
where we have used the Doob submartingale inequality in the previous-to-last step, see Theorem~4.5.1 in~\cite{kuo}, and the value of the first absolute moment of Brownian motion in the final one.

Expressions~\eqref{II}, \eqref{eq:diferencia}, \eqref{eq:limitderi}, and~\eq{eq:crossterm2} prove the existence of the left-hand side of equation~\eqref{HLI_cs}. Lastly, by taking a subsequence if necessary, we deduce
\begin{equation*}
\int\limits_a^b \Phi(W_t)\star \,\mathrm{d}W_t=\int\limits_a^b \Phi(W_t)\,\mathrm{d}W_t+ \frac{255}{512} \int\limits_{a}^{b}\Phi^{\prime}(W_t)\,\mathrm{d}t \quad \textrm{almost surely},
\end{equation*}
since $\lambda=255/512$ by hypothesis.
\end{proof}

Now, we will illustrate the use of these developments in an example. Let us assume that a given physical object moves with a velocity that can be assimilated to a constant multiple of Brownian motion; the constant will henceforth be denoted as $k$. If its mass were unitary, what can be assumed without loss of generality, its kinetic energy $Q_t=k^2 W_t^2/2$ would fulfill:
$$
\mathrm{d}Q_t= k^2 \, W_t \, \mathrm{d} W_t + \frac{k^2}{2} \, \mathrm{d}t,
$$
as a consequence of the It\^o lemma;
so it obeys the stochastic differential equation
$$
\mathrm{d}Q_t= \frac{k^2}{2} \, \mathrm{d}t + k \, \sqrt{2 Q_t} \, \mathrm{d} W_t.
$$
We will suppose that the particle is initially at rest, so the initial condition is $Q_0=0$.
This equation possesses a unique solution, as guaranteed by the Watanabe-Yamada theorem, see Theorem~1 in~\cite{wy}. However, if we employed Stratonovich calculus, we would find
$$
\mathrm{d}Q_t= k^2 \, W_t \circ \mathrm{d} W_t.
$$
This equation is solved by $Q_t=k^2 W_t^2/2$, by $Q_t=0$, and by the uncountable family of solutions
$$
Q_t= \frac{k^2}{2} \, \left(\overleftrightarrow{W}_{t-\tau}\right)^2,
$$
where $\overleftrightarrow{W}_{t}:=W_t$ if $t>0$ and $\overleftrightarrow{W}_{t}:=0$ if $t \le 0$, with $\tau$ being any almost surely positive random variable independent of $W_t$. Note that, unlike $Q_t=k^2 W_t^2/2$ and $Q_t=0$ that are strong solutions, all solutions in this family are weak, see Section~5.3 in~\cite{oksendal}. We emphasize that this is not a problem since, first, weak solutions are those that matter in physics (where it is common to focus on the probability density function of the solution rather than on its paths), and also because infinitely many strong solutions can be built for this model following the procedures in~\cite{cescudero} and~\cite{cescudero2}. The reason for this infinite multiplicity of solutions is the transformation of the origin from being an instantaneously reflecting boundary to being an absorbing one as we shift the interpretation from It\^o to Stratonovich, see~\cite{ce}. On the other hand, we could find the stochastic differential equation in Fehlberg form
$$
\mathrm{d}Q_t= \frac{k^2}{1024} \, \mathrm{d}t + k \, \sqrt{2 Q_t} \star \mathrm{d} W_t,
$$
which is a direct consequence of Theorem~\ref{simtheor}. This equation approximates in a sense the Stratonovich interpretation, but preserves the reflecting character of the boundary at zero, so it is free from the infinite multiplicity of solutions. Of course, the same result follows from any other $\lambda=1/2-\epsilon$ with $0<\epsilon<1/2$. Even one can take the limit $\epsilon \searrow 0$ to recover the unique Stratonovich solution that owns physical meaning. The advantage of the choice $\lambda=255/512$ is simply the preexistence of the Runge-Kutta rule introduced by Fehlberg in~\cite{fehlberg}. So, in a sense, the choice $\epsilon=1/512$ could be considered as a rule of thumb to select a noise interpretation close to that of Stratonovich but free from the multiplicity of solutions generated by a square-root diffusion term (a type of diffusion term that is not rare in physical modeling~\cite{ce}).

As a final note to this subsection, let us mention that modeling the velocity of a physical object with a Brownian motion is not as common as describing it like an Ornstein-Uhlenbeck process; the latter choice is known as the Langevin model for the random dispersal of a particle. However, two things should be said in this regard. First, the Brownian motion approximates the Langevin dispersal in the limit of vanishing friction, so our Brownian model can be considered as an effective description of the object velocity in such a limit. Moreover, all the developments presented here for a particle moving with a Brownian speed can be immediately translated to the case of a Langevin particle. To this end, one just needs to repeat the arguments set forth in~\cite{cescudero,cescudero2,escudero2023ito} {\it mutatis mutandis}.

The next subsection shows some possible consequences of removing one of our key assumptions: uniform ellipticity. This will be illustrated with the three classical interpretations of noise together with the one just introduced in this subsection.

\subsection{Application to heterogeneous diffusions in the absence of uniform ellipticity}\label{sec:heterogeneous}

In this subsection we follow reference~\cite{ccm2013anomalous}, where the following model (except for the different notation) for diffusion in heterogeneous media is studied:
$$
\frac{\mathrm{d}X_t}{\mathrm{d}t}= k \, |X_t|^\alpha \, \xi(t),
$$
where $k>0$, $\alpha \in \mathbb{R}$, $\xi(t)$ is white noise, and the interpretation is that of Stratonovich. Clearly, its diffusion term is degenerated. While the physical consequences of such a model were investigated in~\cite{ccm2013anomalous}, herein we will focus on its stochastic analytical properties. Our arguments will be akin to those employed in~\cite{ce,cescudero,cescudero2,escudero2023ito}.

As a first step, we translate the model into the precise stochastic differential equation
$$
\mathrm{d}X_t= \frac{\alpha k^2}{2} \, X_t^{2 \alpha -1} \, \mathrm{d}t + k \, X_t^\alpha \, \mathrm{d}W_t,
$$
which should be a reasonable proposal while the position of the particle that diffuses in the heterogeneous medium, $X_t$, stays nonnegative (since the absolute value in the original model has been removed). Of course, this means that we need to assume an initial condition $X_0=x_0 \ge 0$. It turns out that this stochastic differential equation presents a rich phenomenology in terms of its solvability as the parameter $\alpha$ is varied. This is shown with particular cases in what follows. They illustrate how the lack of uniform ellipticity can be more or less problematic, depending on the specific value of $\alpha$.

We start with the value $\alpha=1$, for which the equation possesses a unique solution that is strong and global, and given by the explicit formula
$$
X_t = x_0 \, \exp \left(k \, W_t \right).
$$
This is a consequence of the classical existence theory (see, for instance, Theorem~10.3.5 in~\cite{kuo} and Theorem 5.2.1 in~\cite{oksendal}). Moreover, the solution remains positive (null) for all times if the initial condition is positive (null), so removing the absolute value is appropriate in this case. Also, the Stratonovic formulation $\mathrm{d}X_t= k \, X_t \circ \mathrm{d}W_t$ is perfectly valid in this case too.

If $\alpha=2$, we can use It\^o calculus to find
$$
X_t=\frac{1}{x_0^{-1}-k\, W_t},
$$
whenever $x_0>0$ and $X_t=0$ for $x_0=0$. So in the latter case we have global-in-time existence of the solution, while in the former it blows up at the stopping time
$$
\tau_2=\inf\{t>0 \, | \, W_t=1/(x_0 \, k)\},
$$
which is almost surely positive and almost surely finite. In other words, the solution ceases to exist at a random time that is finite with probability one; but in the mean time it is positive, so the removal of the absolute value from the original model is legitimate. Additionally, during the lapse of existence of the solution, the equation might be converted to Stratonovich form without altering any of these results.

For $\alpha=1/2$, the model reads
$$
\mathrm{d}X_t= \frac{k^2}{4} \, \mathrm{d}t + k \, \sqrt{X_t} \, \mathrm{d}W_t.
$$
The change $X_t=Q_t/2$ reduces this model to exactly that of the previous subsection, so all said there about It\^o, Stratonovich, and Fehlberg applies here identically whenever $x_0=0$. For $x_0>0$, we have the strong solution
\begin{equation}\nonumber
X_t= \left(\frac{k}{2} \,W_t + \sqrt{x_0}\right)^2,
\end{equation}
which is nonnegative for all times (thus justifying the removal of the absolute value in the equation) and positive for $0 \le t < \tau_{1/2}$, where
$$
\tau_{1/2} =\inf\{t>0 \, | \, W_t=-2\sqrt{x_0}/k\},
$$
which is an almost surely finite and positive stopping time. Therefore, the solution gets to zero in finite time almost surely, and the same situation described in the previous subsection is reproduced again after the time lapse $\tau_{1/2}$.

For $\alpha=3/4$ we have
$$
\mathrm{d}X_t= \frac{3 k^2}{8} \, \sqrt{X_t} \, \mathrm{d}t + k \, \sqrt[4]{X_t^3} \, \mathrm{d}W_t
\quad \text{and} \quad \mathrm{d}X_t= k \, \sqrt[4]{X_t^3} \circ \mathrm{d}W_t.
$$
For $x_0=0$ we have as solution $X_t=0$ and for $x_0 \ge 0$ we obtain
\begin{equation}\nonumber
X_t= \left(\frac{k}{4} \,W_t + \sqrt[4]{x_0}\right)^4
\end{equation}
both for It\^o and Stratonovich. So the same arguments of the previous subsection and the previous paragraph can be reproduced in this case to prove that the infinite multiplicity of solutions happens in this case in finite time almost surely for \emph{both} interpretations of noise.

Finally, consider $\alpha=1/4$ to arrive at the model
$$
\mathrm{d}X_t= \frac{k^2}{8} \, \frac{1}{\sqrt{X_t}} \, \mathrm{d}t + k \, \sqrt[4]{X_t} \, \mathrm{d}W_t \quad \text{and} \quad \mathrm{d}X_t= k \, \sqrt[4]{X_t} \circ \mathrm{d}W_t.
$$
While the Stratonovich version can be posed for $x_0 \ge 0$, the It\^o one can only be posed for $x_0>0$; in this latter case the solution reads
\begin{equation}\label{eq:solution34}
X_t= \sqrt[3]{\left(\frac{3k}{4} \,W_t + \sqrt[4]{x_0^3}\right)^4}
\end{equation}
for \emph{both} interpretations. Moreover, the Stratonovich interpretation admits the solution $X_t=0$ for $x_0=0$. Then, arguing as before, the Stratonovich interpretation produces an infinite multiplicity of solutions in finite time almost surely. This can be rectified by imposing an instantaneously reflecting boundary at the origin, which selects one solution out of the infinitely many; precisely, the one given by~\eqref{eq:solution34}. The situation gets worse in the case of It\^o, in which the solution ceases to exist in finite time almost surely due to the divergence in the drift.

All in all, we have shown the enormous consequences that changing the noise interpretation might have regarding existence and uniqueness of solution with a single model provided with a tunable parameter (and in the absence of uniform ellipticity). For some values of this parameter, the outcome is independent of the noise interpretation, while for others It\^o is better, and even Stratonovich can be better for some. One can easily check that, in all the cases analyzed, the Fehlberg interpretation behaves as the It\^o one, while the HK interpretation can only worsen things or leave them the same; this latter claim follows easily from the arguments set forth in~\cite{escudero2023ito}. Also, in all these cases, the removal of the absolute value from the original stochastic differential equation is justified by the nonnegativity of the solutions. As a final note, let us mention that these conclusions affect only the cases characterized by the explicitly examined values of $\alpha$. Due to the complexity of the model, each value of $\alpha$ leads to an equation that deserves its own study. 

One of the advantages of these models is that they are able to produce anomalously diffusing random motions~\cite{ccm2013anomalous}. The next subsection will show that the same is accessible for other models that are free from the mathematical pathologies exposed here.

\subsection{The case of scaled Brownian motion}

The previous subsection focused on heterogeneous diffusion processes, which were introduced in the physical literature as capable of producing anomalously diffusing paths~\cite{ccm2013anomalous}. Yet another process that enjoys the same property is the scaled Brownian motion, which is the solution of
$$
\frac{\mathrm{d}X_t}{\mathrm{d}t}= F(t) \, \xi(t),
$$
where $\xi(t)$ is white noise and $F:[0,T] \longrightarrow \mathbb{R}$ is deterministic, and has also been introduced in the physical literature~\cite{jeon2014scaled}. The use of power laws, which are not necessarily continuously differentiable at the origin, as nonlinearities in the previous subsection created some technical difficulties. This raises the question of possible mathematical difficulties when power laws are used for $F(t)$, as is done in~\cite{jeon2014scaled}. In this subsection we show that, contrary to what happened in the two previous subsections, the change in interpretation does not alter the results at all. Moreover, these are free from pathologies and reduce to the classical results one obtains for deterministic integrals under a mild regularity assumption. Since the equation for the scaled Brownian motion is just a stochastic integral, we start introducing what we mean by this sort of integration.

\begin{definition}\label{defirs}
For any $0 \le a < b < \infty$ and any $F:[a,b] \longrightarrow \mathbb{R}$ continuous and of bounded variation, its $\lambda-$integral with respect to the Brownian motion $\{W_t\}_{t\geq 0}$ is defined as
\begin{equation}\label{IRS}
\int\limits_a^b F(t) \stackrel{\lambda}{\odot} \mathrm{d}W_t :=\lim_{\|\Delta_n\|\to 0}\sum_{j=1}^{n} F(t^*_j)(W_{t_j}-W_{t_{j-1}})\quad \textrm{in probability},
\end{equation}
where $\Delta_n=\{t_0, t_1, \ldots, t_{n-1}, t_n\}$ is a partition of $[a, b]$, $\|\Delta_n\|:=\max_{1\leq j \leq n}(t_j-t_{j-1})$ is its diameter, and $t_{j-1}^* = \lambda t_{j} + (1-\lambda) t_{j-1}$ with $0 \le \lambda \le 1$.
\end{definition}

The main result of this section proves that the limit exists and is uniform in $\lambda$. In other words, the equation is independent of the interpretation. Note that the result is obvious from the established literature for continuously differentiable functions, but here we assume much less regularity. In particular, $F(\cdot)$ is not necessarily absolutely continuous, so its Radon-Nikodym derivative does not need to be well defined.

\begin{lemma}\label{lemirs}
The limit~\eqref{IRS} exists and moreover
\begin{equation}\nonumber
\int\limits_a^b F(t) \stackrel{\lambda}{\odot} \mathrm{d}W_t=\int\limits_a^b F(t)\,\mathrm{d}W_t \quad \textrm{almost surely}.
\end{equation}
\end{lemma}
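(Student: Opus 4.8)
The plan is to identify each Riemann sum in~\eqref{IRS} with the Itô integral of a \emph{deterministic} step function and to control the error through the Itô isometry; the deterministic character of $F$ is exactly what makes the argument go through, uniformly in $\lambda$. Fix a partition $\Delta_n=\{a=t_0<\cdots<t_n=b\}$ and write $t_j^*:=\lambda t_j+(1-\lambda)t_{j-1}\in[t_{j-1},t_j]$ for the evaluation point in the $j$-th subinterval. Define the deterministic step function
\[
G_n(t):=\sum_{j=1}^n F(t_j^*)\,\mathbbm{1}_{[t_{j-1},t_j)}(t),\qquad t\in[a,b].
\]
Being deterministic, measurable, and square-integrable on $[a,b]$ (continuity of $F$ on a compact set), $G_n$ is an admissible Itô integrand, and by the very definition of the Itô integral of a step function, $\sum_{j=1}^n F(t_j^*)(W_{t_j}-W_{t_{j-1}})=\int_a^b G_n(t)\,\mathrm{d}W_t$ almost surely.

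Next I would estimate $\|G_n-F\|_{L^2([a,b])}$. Since $F$ is continuous on the compact interval $[a,b]$, it is uniformly continuous there; let $\omega_F$ denote its modulus of continuity. For $t\in[t_{j-1},t_j)$ one has $t_j^*\in[t_{j-1},t_j]$, hence $|t-t_j^*|\le\|\Delta_n\|$ and $\bigl|F(t)-F(t_j^*)\bigr|\le\omega_F(\|\Delta_n\|)$; therefore
\[
\|G_n-F\|_{L^2([a,b])}^2=\sum_{j=1}^n\int_{t_{j-1}}^{t_j}\bigl|F(t)-F(t_j^*)\bigr|^2\,\mathrm{d}t\le\omega_F\bigl(\|\Delta_n\|\bigr)^2\,(b-a),
\]
and this bound does not involve $\lambda\in[0,1]$. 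The Itô isometry then yields $\mathbb E\bigl[(\int_a^b G_n\,\mathrm{d}W_t-\int_a^b F\,\mathrm{d}W_t)^2\bigr]=\|G_n-F\|_{L^2([a,b])}^2\to 0$ as $\|\Delta_n\|\to 0$, uniformly in $\lambda$. Thus the Riemann sums in~\eqref{IRS} converge in $L^2(\Omega)$, and hence in probability, to $\int_a^b F(t)\,\mathrm{d}W_t$ for every $\lambda\in[0,1]$; in particular the limit defining the $\lambda$-integral exists. Since a limit in probability is almost surely unique, we conclude $\int_a^b F(t)\stackrel{\lambda}{\odot}\mathrm{d}W_t=\int_a^b F(t)\,\mathrm{d}W_t$ almost surely.

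I do not expect a genuine obstacle here. The only points needing care are checking that $G_n$ is a bona fide Itô integrand (immediate from continuity of $F$ on a compact set) and observing that the error bound is manifestly $\lambda$-free, which delivers uniformity in $\lambda$ at no cost. The conceptual content worth stressing is the contrast with Theorem~\ref{simtheor}: there, evaluating $\Phi(W_t)$ at the shifted node produced the correction $\tfrac{255}{512}\int_a^b\Phi'(W_t)\,\mathrm{d}t$ precisely because $\Phi(W_t)$ fluctuates on the same $\sqrt{\|\Delta_n\|}$ scale as the Brownian increments, so the cross term $\sum_j\Phi'(\cdot)\,(W_{t_j^*}-W_{t_{j-1}})^2$ contributes $\lambda$-proportionally in the limit; here $F$ is deterministic and varies only on the scale $\|\Delta_n\|$, so the analogous discrepancy $\sum_j[F(t_j^*)-F(t_{j-1})](W_{t_j}-W_{t_{j-1}})$ has $L^2$-norm of order $\omega_F(\|\Delta_n\|)$ and vanishes, leaving no correction term whatsoever — which also explains why plain continuity (a fortiori continuity together with bounded variation) suffices and why differentiability of $F$ is irrelevant. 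Should one prefer to avoid the $L^2$ construction of the Itô integral, an alternative route is the pathwise integration-by-parts identity $\int_a^b F\,\mathrm{d}W_t=F(b)W_b-F(a)W_a-\int_a^b W_t\,\mathrm{d}F(t)$, which is available because $F$ is continuous and of bounded variation (so that $\int_a^b W_t\,\mathrm{d}F(t)$ exists as a pathwise Riemann--Stieltjes integral), combined with the corresponding summation by parts on the Riemann sums and the uniform continuity of the Brownian trajectory on $[a,b]$; the bounded-variation hypothesis is exactly what makes this second route legitimate.
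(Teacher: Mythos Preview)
Your proof is correct, but it follows a genuinely different route from the paper's. The paper writes the discrepancy $D_{\Delta_n}=\sum_j[F(t_j^*)-F(t_{j-1})](W_{t_j}-W_{t_{j-1}})$ and bounds $\mathbb{E}[|D_{\Delta_n}|]$ in $L^1(\Omega)$ by pulling out $\mathbb{E}[|W_{t_j}-W_{t_{j-1}}|]\le\sqrt{2\|\Delta_n\|/\pi}$ and then controlling $\sum_j|F(t_j^*)-F(t_{j-1})|$ via the total variation of $F$; the bounded-variation hypothesis is therefore essential to the paper's argument. You instead embed each Riemann sum as the It\^o integral of a deterministic step function and invoke the It\^o isometry, with the error governed by the modulus of continuity $\omega_F(\|\Delta_n\|)$. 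Your route is slightly cleaner, works in $L^2(\Omega)$ rather than $L^1(\Omega)$, and---as you correctly note---shows that mere continuity of $F$ already suffices for the lemma; the bounded-variation assumption is superfluous at this stage and only becomes relevant for the integration-by-parts Corollary that follows. Conversely, the paper's $L^1$ estimate has the minor advantage of making the role of bounded variation explicit and visible, which dovetails with that Corollary and with the stated goal of handling integrands (devil staircases, the Minkowski question mark function, etc.) that are continuous and BV but not absolutely continuous.
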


\begin{proof}
The integral on the right-hand side is a well-defined It\^o integral that admits the Riemann sum approximation in Definition~\ref{defirs} with $\lambda=0$, see Theorem~21 of Section~5 in Chapter~II of~\cite{protter2005stochastic}; hence the case $\lambda=0$ is a tautology. For $\lambda \neq 0$ compute
\begin{equation}\nonumber
\int\limits_a^b F(t) \stackrel{\lambda}{\odot} \mathrm{d}W_t - \int\limits_a^b F(t)\,\mathrm{d}W_t= \mathbb{P}-\lim_{\|\Delta_n\|\to 0} D_{\Delta_n},
\end{equation}
where
\begin{equation}\nonumber
D_{\Delta_n} = \sum_{j=1}^{n}\Big[F(t_j^*)-
F(t_{j-1})\Big](W_{t_j}-W_{t_{j-1}}).
\end{equation}
Therefore
\begin{equation}\nonumber
\begin{split}
\mathbb{E} \left[ \left|D_{\Delta_n} \right| \right] &= \mathbb{E} \left[ \left|\sum_{j=1}^{n}\Big[F(t_j^*)-
F(t_{j-1})\Big](W_{t_j}-W_{t_{j-1}}) \right| \right] \\
&\le \sum_{j=1}^{n} \left| F(t_j^*)-
F(t_{j-1})\right| \, \mathbb{E} \left[ \left| W_{t_j}-W_{t_{j-1}} \right|\right] \\
&\le \sqrt{\frac{2 \|\Delta_n\|}{\pi}} \, \sum_{j=1}^{n} \left| F(t_j^*)-
F(t_{j-1})\right|
\\
&\le \sqrt{\frac{2 \|\Delta_n\|}{\pi}} \, \sum_{j=1}^{n} \left[ \left| F(t_j)-
F(t_{j}^*)\right| + \left| F(t_j^*)-
F(t_{j-1})\right| \right] \longrightarrow 0, \quad \text{as} \quad n \to \infty,
\end{split}
\end{equation}
by the triangle inequality, the value of the first absolute moment of the Brownian increments, and the bounded variation of the integrand. Then,
$$
L^1(\Omega)-\lim_{\|\Delta_n\|\to 0} D_{\Delta_n}=0 \, \Longrightarrow \, \mathbb{P}-\lim_{\|\Delta_n\|\to 0} D_{\Delta_n}=0,
$$
and, hence, the limit~\eqref{IRS} exists. Now, by passing to a subsequence if necessary, the equality in the statement is established almost surely.
\end{proof}

\begin{remark}
Note that the hypothesis on the bounded variation of the integrands covers the case of simple power laws studied in~\cite{jeon2014scaled}. In fact, it covers the case of any function that is monotonic on the integration interval, assumed compact.
\end{remark}

\begin{corollary}
For any $0 \le a < b < \infty$ and any $F:[a,b] \longrightarrow \mathbb{R}$ continuous and of bounded variation, its $\lambda-$integral 
\begin{equation}\nonumber
\int\limits_a^b F(t) \stackrel{\lambda}{\odot} \mathrm{d}W_t = F(b) \, W_b - F(a) \, W_a
-\int_a^b W_t \, \mathrm{d}F(t)
\end{equation}
almost surely for every $0 \le \lambda \le 1$.
\end{corollary}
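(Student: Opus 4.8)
The plan is to reduce the identity to an integration-by-parts formula for the It\^o integral and then establish that formula by summation by parts at the level of Riemann sums, in the same elementary spirit as the proof of Lemma~\ref{lemirs}. First I would invoke Lemma~\ref{lemirs} itself: for every $\lambda\in[0,1]$ one has $\int_a^b F(t)\stackrel{\lambda}{\odot}\mathrm{d}W_t=\int_a^b F(t)\,\mathrm{d}W_t$ almost surely, so the whole statement collapses to proving
\[
\int\limits_a^b F(t)\,\mathrm{d}W_t = F(b)\,W_b-F(a)\,W_a-\int\limits_a^b W_t\,\mathrm{d}F(t)\qquad\text{almost surely},
\]
where the right-most integral is understood as the pathwise Riemann--Stieltjes integral of the continuous map $t\mapsto W_t$ against the bounded-variation function $F$; this is well defined for almost every $\omega$ precisely because Brownian paths are continuous and $F$ has bounded variation.

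Next I would fix a sequence of partitions $\Delta_n=\{a=t_0<t_1<\dots<t_n=b\}$ with $\|\Delta_n\|\to 0$ and apply summation by parts to the left-point Riemann sums approximating the It\^o integral. Starting from the telescoping identity $\sum_{j=1}^n\bigl(F(t_j)W_{t_j}-F(t_{j-1})W_{t_{j-1}}\bigr)=F(b)W_b-F(a)W_a$ and decomposing each summand as
\[
F(t_j)W_{t_j}-F(t_{j-1})W_{t_{j-1}}
= W_{t_j}\bigl(F(t_j)-F(t_{j-1})\bigr)+F(t_{j-1})\bigl(W_{t_j}-W_{t_{j-1}}\bigr),
\]
one arrives at the exact finite identity
\[
\sum_{j=1}^n F(t_{j-1})\bigl(W_{t_j}-W_{t_{j-1}}\bigr)
= F(b)W_b-F(a)W_a-\sum_{j=1}^n W_{t_j}\bigl(F(t_j)-F(t_{j-1})\bigr).
\]

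It then remains to pass to the limit $\|\Delta_n\|\to 0$ in this identity. On the left, the sum converges in probability to $\int_a^b F(t)\,\mathrm{d}W_t$, since $F$ is a deterministic, bounded-variation (hence bounded, measurable) integrand whose left-point Riemann sums approximate the It\^o integral — the same fact already used for $\lambda=0$ in Lemma~\ref{lemirs} via Theorem~21 of Section~5 in Chapter~II of~\cite{protter2005stochastic}. On the right, $\sum_{j=1}^n W_{t_j}\bigl(F(t_j)-F(t_{j-1})\bigr)$ is a Riemann--Stieltjes sum with right-endpoint tags; because almost every Brownian path is continuous on $[a,b]$ and $F$ is of bounded variation, this sum converges pathwise — and independently of the choice of tags, in particular for the right-endpoint choice — to $\int_a^b W_t\,\mathrm{d}F(t)$. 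Passing along a subsequence of partitions on which the left-hand side also converges almost surely, the finite identity survives the limit almost surely, which gives the integration-by-parts formula and hence the corollary.

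The one point requiring care is the juxtaposition of the two modes of convergence: the It\^o Riemann sums converge only in probability whereas the Stieltjes sums converge pathwise, so one must first restrict to a common subsequence of partitions before equating limits, and only afterwards conclude that the two (already well-defined, partition-independent) limiting random variables coincide almost surely. Everything else is bookkeeping; alternatively, one could bypass the explicit computation by applying the It\^o product rule to the semimartingale $t\mapsto F(t)W_t$ together with $[F,W]\equiv 0$, but the summation-by-parts route is self-contained and stays at the elementary level of Lemma~\ref{lemirs}.
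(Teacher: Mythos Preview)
Your proposal is correct and follows essentially the same approach as the paper: reduce to the It\^o integral via Lemma~\ref{lemirs}, use the telescoping identity (summation by parts) at the level of Riemann sums with right-endpoint tags on the Stieltjes side, and then pass to the limit after extracting a subsequence to reconcile convergence in probability with almost sure convergence. The paper's proof is organized slightly differently---it adds the two approximating sums and observes the telescoping cancellation rather than starting from the telescoping identity---but the content is identical.
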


\begin{proof}
First note that the integral on the right-hand side is almost surely well defined as a Riemann–Stieltjes integral because the integrator is of bounded variation and the integrand is continuous with probability one. Moreover, the integral on the left-hand side is well defined by Lemma~\ref{lemirs}. Again by this lemma, we have that
\begin{equation}\nonumber
\begin{split}
\int\limits_a^b F(t) \stackrel{\lambda}{\odot} \mathrm{d}W_t &=\int\limits_a^b F(t)\,\mathrm{d}W_t \\
&= \mathbb{P}-\lim_{||\Delta_n||\to 0}\sum_{j=1}^{n} F(t_{j-1}) \left( W_{t_j}-W_{t_{j-1}}\right).
\end{split}
\end{equation}
On the other hand, the integral
\begin{equation}\nonumber
\begin{split}
\int_a^b W_t \, \mathrm{d}F(t) &= \lim_{||\Delta_n||\to 0}\sum_{j=1}^{n} W_{t_{j-1}} \left[ F(t_{j}) - F(t_{j-1})\right] \\
&= \lim_{||\Delta_n||\to 0}\sum_{j=1}^{n} W_{t_{j}} \left[ F(t_{j}) - F(t_{j-1})\right]
\end{split}
\end{equation}
almost surely by the properties of Riemann–Stieltjes integration. Now, since almost sure convergence implies convergence in probability, we find
\begin{eqnarray}\nonumber
&& \int\limits_a^b F(t) \stackrel{\lambda}{\odot} \mathrm{d}W_t + \int_a^b W_t \, \mathrm{d}F(t) \\ \nonumber
&=& \mathbb{P}-\lim_{||\Delta_n||\to 0}\sum_{j=1}^{n} F(t_{j-1}) \left( W_{t_j}-W_{t_{j-1}}\right) + \mathbb{P}-\lim_{||\Delta_n||\to 0}\sum_{j=1}^{n} W_{t_{j}} \left[ F(t_{j}) - F(t_{j-1})\right]
\\ \nonumber
&=& F(b) \, W_b - F(a) \, W_a,
\end{eqnarray}
because the cancellation of the mixed terms leads to a telescopic sum. By passing to a subsequence if necessary, the equality is the statement becomes established.
\end{proof}

\begin{remark}
This corollary shows that the classical integration by parts formula of the Riemann–Stieltjes integral is still valid in this case.
\end{remark}

Note that the two results introduced in this subsection extend, for the particular case of a deterministic integrand, Theorem~8.3.4 in~\cite{kuo} and Theorem~30 in Section~5 of Chapter~V in~\cite{protter2005stochastic} (with, obviously, many more simplifying assumptions in the case of this second statement). Note also that they directly imply existence and uniqueness of solution to the scaled Brownian motion equation. Some types of singular functions that can be integrated within the present framework, but not in the classical one, are devil staircases~\cite{falconer}, the Minkowski question mark function~\cite{paradis2001derivative}, the Lebesgue function~\cite{kawamura2011set}, and their extensions~\cite{sanchez2012singular}.

Overall, models for anomalously diffusing particles have been introduced in~\cite{ccm2013anomalous} and~\cite{jeon2014scaled}. While both types of models rely on the presence of power laws, this type of functional dependence is introduced in different manners. This implies that the first sort of models presents technical difficulties that are absent in the second, as we have proven in this and the previous subsection.

\section{Conclusions}
\label{sec:conclusions}

The notion of kinetic interpretation of noise was introduced in~\cite{hutter1998}, although it might have been introduced under another format elsewhere, due to the natural character of this idea. Indeed, it could seem appealing to consider the diffusion tensor as more fundamental than the noise amplitude (which is its square root), given its appearance in the fluctuation-dissipation relation. As such, it can be considered a reasonable datum to provide as input for a mathematical model. If this model were a Fokker-Planck equation, we would have only two possibilities: the Itô form~\eqref{CDE0} and the Fick form~\eqref{opeEquation}. Any other form of the Fokker-Planck equation relies on a fractional power of the diffusion tensor rather than on the tensor itself (this can be seen, for instance, in equation~(9) of~\cite{pachecopozo2025njp}). Since there is a clear connection between the Itô form of the Fokker-Planck equation and SDEs (see Section~\ref{sec:cdequations}), one could be tempted to conjecture some similar relation starred by the Fick form. However, this relation only exists under a very restrictive structural condition on the diffusion tensor (at least, if one wants to remove the extra drift term), precisely $\nabla \cdot \mathbf{D} = 2 \sigma \nabla \cdot \sigma^\top$, what makes it highly non-generic (except in one dimension) due to the non-commutativity of the matrix algebra. Section~\ref{sec:cdehkc} was devoted to state this condition as well as to give examples of diffusion tensors that either fulfill it or not. Still, some of the very structured tensors that fulfill it are of physical relevance.

Section~\ref{sec:beyond} addresses a number of problems that arise from the analysis of the kinetic interpretation of noise. The first is the notion of stochastic integration introduced in~\cite{hutter1998}, which we have seen to be ill-posed. This notion was motivated by the classical \emph{Fixman algorithm}~\cite{fixman1978}. To show that it is possible to introduce notions of stochastic integration from numerical algorithms, we illustrated so with the Fehlberg $255/512-$interpretation in that section. Subsequently, we used it, along with the other three more classical interpretations, to model stochastic transport in heterogeneous media. Moreover, we employed the resulting models to illustrate what happens when one of our main hypotheses, the uniform ellipticity assumption, is removed. Finally, we analyzed a family of models that are capable of producing anomalously diffusing paths, as the previous ones, but, in contrast to those, enjoy much more mathematical tractability.

Although our work is primarily methodological and aims to establish a mathematical framework for the rigorous analysis of physical models, it may also have practical implications. As already mentioned, the attempt of stochastic integration in~\cite{hutter1998} was motivated by the Fixman algorithm~\cite{fixman1978}. More recent numerical schemes depart from this~\cite{delong2014a,delong2013,delong2014}. Our work suggests reconsidering numerical analytical methods that arise from the kinetic interpretation of noise. In particular, it points to a direct numerical approach: determining the Itô SDE corresponding to a given convection-diffusion equation as shown in Section~\ref{sec:cdequations}, and subsequently employing the numerical method in~\cite{chen2018numerical}. However, given the complexity of fluctuating hydrodynamics, it is possible that a different approach is required in that field.

We also believe that our developments may help lay the groundwork for a theoretical framework for designing and analyzing media capable of manipulating diffusive trajectories. Specifically, spatially engineered diffusion tensors, coupled with an appropriate choice of stochastic interpretation, can give rise to the phenomenon of \emph{stochastic cloaking}, in which certain regions of space are effectively hidden from particles undergoing random diffusive motion. Stochastic cloaking thus represents a potential application of the principles of stochastic calculus introduced here to problems involving the control of random transport processes. In this sense, our framework may provide part of the mathematical foundation for the design of metamaterials capable of concealing regions of space from diffusing particles. In~\cite{roberts2024cloaking}, it is shown how the properties of the diffusion tensor, together with the choice of stochastic interpretation, can be combined to controllably modify diffusion in space. In particular, that work suggests that the choice of the Itô interpretation is key to achieving this effect, despite the seemingly more natural character of the Fick law in that context.

Reference~\cite{roberts2024cloaking} provides yet another example, along with~\cite{bhattacharyay2020generalization,bhattacharyay2025active,ce,dhawan2024ito,cescudero,cescudero2,em,escudero2023ito}, of the greater flexibility of the Itô interpretation in a wide range of physical applications. In part, our present work points in the same direction, but, simultaneously, highlights a specific structure of the diffusion tensor that permits giving a precise meaning to the kinetic interpretation of noise. Perhaps that interpretation is not as flexible as that of Itô, but the highlighted structure could be of relevance in some physical problems.

\section*{Acknowledgements}

This work has been partially supported by the Government of Spain (Ministerio de Ciencia, Innovación y Universidades) and the European Union through Projects PID2024-158823NB-I00 and
CPP2024-011557.

\bibliographystyle{plain}  
\bibliography{biblio}

\vskip10mm
\noindent
{\footnotesize
Carlos Escudero\par\noindent
Departamento de Matem\'aticas Fundamentales\par\noindent
Universidad Nacional de Educaci\'on a Distancia\par\noindent
{\tt cescudero@mat.uned.es}\par\vskip1mm\noindent
}
\vskip2mm
\noindent
{\footnotesize
Helder Rojas\par\noindent
Departamento de Matem\'aticas Fundamentales\par\noindent
Universidad Nacional de Educaci\'on a Distancia\par\noindent
{\tt hrojas36@alumno.uned.es}\par\vskip1mm\noindent
}
\end{document}